\newcommand{\al}{\alpha}
\newcommand{\be}{\beta}
\newcommand{\Bf}{\mathbf{B}}
\newcommand{\Cf}{\mathbf{C}}
\newcommand{\C}{\mathbb{C}}
\newcommand{\gqs}{\geqslant}
\newcommand{\gr}{\operatorname{Gr}}
\newcommand{\id}{\operatorname{id}}
\newcommand{\Img}{\operatorname{Im}}
\newcommand{\ind}{\operatorname{Ind}}
\newcommand{\lqs}{\leqslant}
\newcommand{\la}{\langle}
\newcommand{\N}{\mathbb{N}}
\newcommand{\ra}{\rangle}
\newcommand{\R}{\mathbb{R}}
\newcommand{\supp}{\operatorname{supp}}
\newcommand{\tmu}{t^{-1}}
\newcommand{\vep}{\varepsilon}
\newcommand{\what}[1]{\widehat{#1}}
\newcommand{\wt}[1]{\widetilde{#1}}
\newtheorem{theorem}{Theorem}[section]
\newtheorem{lemma}[theorem]{Lemma}
\theoremstyle{definition}
\newtheorem{definition}[theorem]{Definition}
\newtheorem{example}[theorem]{Example}
\theoremstyle{remark}
\numberwithin{equation}{section}
\begin{document}

\title{An Imprimitivity Theorem for Partial Actions}

\author{Dami\'an Ferraro}

\address{Departamento de Matem\'atica y Estad\'istica del Litoral, Universidad
de la Rep\'ublica, Gral. Rivera 1350. Salto. Uruguay.}

\thanks{This work was supported by the CSIC and started when the author was
a member of the Centro de Matem\'atica of the Facultad de
Ciencias, Universidad de la Rep\'ublica, Uruguay.}

\email{damianferr@gmail.com}

\subjclass[2010]{Primary 46L05. Secondary 46L55.}
\keywords{partial actions, crossed products, Morita equivalence.}

\date{\today}

\begin{abstract}
We define proper, free and commuting partial actions on upper semicontinuous
bundles of $C^*-$algebras. With such, we construct the $C^*-$algebra induced by
a partial action and a partial actions on that algebra. Using those action we
give a generalization, to partial actions, of Raeburn's Symmetric Imprimitivity
Theorem \cite{Raeburn}.
\end{abstract}

\maketitle

\section*{Introduction}
The main idea of this article appear in the following example. Let $\be$ be a
continuous, free and proper action of a locally compact and Hausdorff
(LCH) group $G$ on a LCH space $Y$. This gives us a continuous action of $G$
on the continuous functions vanishing at infinity of $Y,$ $C_0(Y)$. If $Y/G$ is
the orbit space of $Y$, then Green's Theorem \cite{Rieffel-TeoGreen}
implies $C_0(Y/G)$ is strongly Morita equivalent to the crossed product
$C_0(Y)\rtimes_{\be} G.$

Now consider an open subset $X\subset Y$ such that $\cup\{\be_t(X):\ t\in
G\}=Y$. Lets call $\al$ the restriction of $\be$ to $X$. That is, for every
$t\in G$ set $\al_t:X\cap \be_{\tmu}(X)\to X\cap \be_t(X),\ x\mapsto \be_t(x).$
This is an example of a partial action. Now consider the open set
$\Gamma:=\{(t,x)\in G\times X\ |\ \be_{\tmu}(x)\in X\}\subset G\times Y.$ The
crossed product $C_0(X)\rtimes_\al G$ is the closure of $C_c(\Gamma)\subset
C_c(G,Y)$ in $C_0(Y)\rtimes_\be G.$ It is strongly Morita equivalent to
$C_0(Y)\rtimes_\be G$ \cite{Fernando,Abadie-Marti}.

Putting all together, we conclude that $C_0(X)\rtimes_\al G$ is strongly Morita
equivalent to $C_0(Y/G).$ The objective of the present work is to generalize the
previous idea to the case where we just know $X,$ $G$ and $\al.$ That is,
$\al$ is a partial action of $G$ on $X.$

The outline of this work is as follows. In Section 1 we give the
definitions of free, proper and commuting partial actions and prove some basic
results involving those concepts, it is based on \cite{Fernandophd,Fernando}. In
the second section we define partial actions on upper semicontinuous
$C^*-$bundles and, with such, construct the induced $C^*-$algebra of a partial
action and partial actions on those induced algebras. Here we follow Raeburn's
work \cite{Raeburn}. Finally, we prove our main theorem which is a
generalization, to partial actions, of Raeburn's Theorem \cite{Raeburn}. On a
first read, to understand the basic ideas, we suggest the reader to consider
bundles of the form $X\times \C$ ($X$ is a topological space and $\C$ the
complex numbers) with trivial action on $\C.$

\section{Properties of Partial Actions}\label{Properties of Partial Actions}

Through this work the letters $G$, $H$ and $K$ will denote LCH topological
groups and $X,\ Y$ topological spaces. When any additional topological property
is required it will be explicitly mentioned (this will never happen for the
groups).

This section is a brief resume of some results contained in \cite{Fernando} and
in the PHD Thesis \cite{Fernandophd}, for that reason some proof will be
omitted. We start by recalling the definition of partial action.

\begin{definition}[\cite{Exel,Fernando,Fernandophd}]\label{definicion ap}
A pair $\al=(\{X_t\}_{t\in H},\{\al_t\}_{t\in H})$ is a partial action of  $H$
on $X$ if, for every $t,s\in H$:
\begin{enumerate}
\item $X_t$ is a subset of $X$ and $X_e=X$ ($e$ being the identity of $H$).
\item $\al_t:X_t\to X_{t^{-1}}$ is a bijection and $\al_e=\id_X$ (the identity
on $X$).
\item If $x\in X_{t^{-1}}$ and $\al_t(x)\in X_{s^{-1}}$, then $x\in
X_{(st)^{-1}}$ and $\al_{st}(x)=\al_s\circ \al_t(x).$
\end{enumerate}
\end{definition}

The domain of $\al$ is the set $\Gamma_{\al}:=\{ (t,x)\in H\times X\ |\ x\in
X_{t^{-1}} \}.$ Recall $\al$ is continuous if $\Gamma_{\al}$ is open in $H\times
X$ and the function, also called $\al$, $\Gamma_{\al}\to X$, $(t,x)\mapsto
\al_t(x)$, is continuous. The graph of the partial action $\al$, $\gr(\al)$, is
the graph of the function $\al:\Gamma_{\al}\to X$. We say $\al$ has closed graph
if $\gr(\al)$ is closed in $H\times X\times X$.

Take two continuous partial actions of $H,$ $\al$ and $\be,$ on the spaces $X$
and $Y$ respectively. A morphism $f:\al\to\be$ is a continuous function $f:X\to
Y$ such that for every $t\in H:$ $f(X_t)\subset Y_t$ and the restriction of
$\be_t\circ f$ to $X_{\tmu}$ equals $f\circ \al_t.$

Given $\be$ as before and a non empty open set $Z\subset Y,$ the restriction of
$\be$ to $Z$ is the continuous partial action of $H$ on $Z$ given by
$\gamma_t:Z\cap \be_{\tmu}(Z)\to Z\cap \be_t(Z),$ $z\mapsto \be_t(z).$

Up to isomorphism of partial actions, every continuous partial action can be
obtained as a restriction of a global action. That is, given
$\al$ as before there exits a global and continuous action of $H$ on a
topological space $Y,$ $\be,$ and an open set $Z\subset Y$ such that $\al$ is
isomorphic to the restriction of $\be$ to $Z.$ If in addition $Y=\cup\{\be_t(Z)\
|\ t\in H \},$ we say $\be$ is an \textit{enveloping action} of $\al.$
Enveloping actions exists and are unique up to isomorphism of (partial)
actions \cite{Fernando,Fernandophd}. The enveloping\footnote{We say ``the''
enveloping action because it is unique up to isomorphisms.} action of $\al$ is
denoted $\al^e$ and the space where it acts $X^e,$ we also think $X$ is an open
set of $X^e$ and $\al$ is the restriction of $\al^e$ to $X.$

The orbit of a subset $U\subset X$ by $\al$ is the set $HU:=\cup\{\al_t(U\cap 
X_{\tmu})\ |\ t\in G\}.$ The orbit of a point $x\in X$ is the orbit of the set
$\{x\}$ and is denoted $H x.$ If we want to emphasize the name of the action we
write $\al H x.$ The orbits of two points are equal or disjoint and the union of
all of them is equal to $X.$ With this partition of $X$ we construct the
quotient space $X/H$ with the quotient topology, this is the \textit{orbit
space} of $\al.$ The canonical projection $X\to X/H$ is continuous, surjective
and open. The function $X/H\to X^e/H, $ $\al H x\to \al^e H x$ is a
homeomorphism.

Raeburn's Symmetric Imprimitivity Theorem involves free, proper and commuting
actions. We now give the corresponding definitions for partial actions. We
refer the reader to \cite{Fernandophd} to a more detailed exposition of these
concepts.

The \textit{stabilizer} of a point $x\in X$ is the set $H_x:=\{t\in H\ | \ x\in
X_{\tmu},\ \al_t(x)=x\}.$ It is easy to see that $H_x$ is a subgroup of $H,$
not necessarily closed if the action is not global. A partial action is
\textit{free} is the stabilizer of every point is the set $\{e\}.$ A partial
action is free if and only if it's enveloping action is free.

The next concept we define is commutativity. We will have two continuous partial
actions, $\al$ and $\be$, of $H$ and $K$, on $X$. As we do not want any
confusions, we will use the notation $\al_s:X^H_{s^{-1}}\to X^H_s$ and
$\be_t:X^K_{t^{-1}}\to X^K_t$, for $s\in H$ and $t\in K$.

We say $\al$ and $\be$ \textit{commute} if for every $(s,t)\in H\times K$ (i)
$\al_s(X^H_{s^{-1}}\cap X^K_t)=\be_t(X^K_{t^{-1}}\cap X^H_s)$ and
(ii) $\al_s\circ \be_t(x)=\be_t\circ \al_s(x)$, for every $x\in \al_{s^
{-1}}(X^H_s\cap X^K_{t^{-1}}).$ This definition expresses the fact that we can
compute $\al_s\be_t(x)$ if and only if we can compute $\be_t\al_s(x)$, and in
that case $\al_s\be_t(x)=\be_t\al_s(x)$. As we can see, if both actions are
global, this is the usual notion of commuting actions.

Recall \cite{Exel-Laca-Quigg} a subset $U\subset X$ is said $\al-$invariant if
$\al_t(X^H_{t^{-1}}\cap U)\subset U$, for every $t\in H$. Condition (i) of the
previous definition implies $X^K_s$ is $\al-$invariant for every $s\in K$.

An important property of commuting global actions is that we can define an
action of the product group, this is also true for partial actions.

\begin{lemma}[cf. \cite{Fernandophd} Proposi\c c\~ao 4.35]\label{accion
producto}
If $\al$ and $\be$ commute then there is a continuous partial action, $\al\times
\be$, of $G:=H\times K$ on $X$ such that, for every $(s,t)\in G,$
\begin{enumerate}
\item $X_{(s,t)}=\be_t(X^K_{t^{-1}}\cap X^H_s)=\al_s(X^H_{s^{-1}}\cap X^K_t)$
and
\item $\al\times \be_{(s,t)}=\al_s\circ\be_t$.
\end{enumerate}
\end{lemma}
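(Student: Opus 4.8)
The plan is to take the two formulas of the statement as the \emph{definition} of the product action and then verify, in order, the three axioms of Definition~\ref{definicion ap} together with continuity.

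First I set $X_{(s,t)}:=\al_s(X^H_{s^{-1}}\cap X^K_t)$ for $(s,t)\in G$. Commutativity condition (i) says exactly that this equals $\be_t(X^K_{t^{-1}}\cap X^H_s)$, so $X_{(s,t)}$ is well defined and both expressions of the statement agree; in particular $X_{(e,e)}=\al_e(X\cap X)=X$. I define $(\al\times\be)_{(s,t)}$ to be the partial map $\al_s\circ\be_t$, which is defined at $x$ precisely when $x\in X^K_{t^{-1}}$ and $\be_t(x)\in X^H_{s^{-1}}$, i.e.\ on $\be_{t^{-1}}(X^K_t\cap X^H_{s^{-1}})$. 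By the ``$\be$''-formula (with $s,t$ replaced by $s^{-1},t^{-1}$) this domain equals $X_{(s^{-1},t^{-1})}=X_{(s,t)^{-1}}$. Since $\be_t$ restricts to a bijection $X_{(s,t)^{-1}}\to X^H_{s^{-1}}\cap X^K_t$ and $\al_s$ to a bijection of $X^H_{s^{-1}}\cap X^K_t$ onto $X_{(s,t)}$, the composite is a bijection $X_{(s,t)^{-1}}\to X_{(s,t)}$, and $(\al\times\be)_{(e,e)}=\id$; this gives the first two axioms.

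The heart of the matter is the third axiom. Suppose $x\in X_{(s,t)^{-1}}$ and $y:=\al_s(\be_t(x))\in X_{(s',t')^{-1}}$. Write $z:=\be_t(x)\in X^K_t\cap X^H_{s^{-1}}$, so $y=\al_s(z)\in X^H_s$, while the hypothesis on $y$ gives $y\in X^K_{t'^{-1}}$ and $\be_{t'}(y)\in X^H_{s'^{-1}}$. Since $y=\al_s(z)$ forces $z=\al_{s^{-1}}(y)$, and $X^K_{t'^{-1}}$ is $\al$-invariant (a consequence of (i)), we get $z\in X^K_{t'^{-1}}$; hence $\be_{t'}(z)$ is defined and condition (ii) yields the key swap $\al_s(\be_{t'}(z))=\be_{t'}(\al_s(z))=\be_{t'}(y)$. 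Put $v:=\be_{t'}(z)$. From $v\in X^H_{s^{-1}}$ and $\al_s(v)=\be_{t'}(y)\in X^H_{s'^{-1}}$, the third axiom for $\al$ gives $v\in X^H_{(s's)^{-1}}$ and $\al_{s's}(v)=\al_{s'}(\be_{t'}(y))$; from $x\in X^K_{t^{-1}}$ and $\be_t(x)=z\in X^K_{t'^{-1}}$, the third axiom for $\be$ gives $x\in X^K_{(t't)^{-1}}$ and $\be_{t't}(x)=v$. Therefore $\be_{t't}(x)=v\in X^H_{(s's)^{-1}}$, so $x\in X_{(s's,t't)^{-1}}$, and
\[
(\al\times\be)_{(s's,t't)}(x)=\al_{s's}(\be_{t't}(x))=\al_{s's}(v)=\al_{s'}(\be_{t'}(y))=(\al\times\be)_{(s',t')}\bigl((\al\times\be)_{(s,t)}(x)\bigr).
\]
Since $(s',t')(s,t)=(s's,t't)$ in $G$, this is exactly the third axiom.

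For continuity, introduce $F\colon H\times\Gamma_\be\to H\times X$, $F((s,t),x)=(s,\be_t(x))$, which is continuous because $\be$ is. The set $H\times\Gamma_\be$ is open in $G\times X$, and the domain description above shows $\Gamma_{\al\times\be}=F^{-1}(\Gamma_\al)$, which is thus open; the action map $((s,t),x)\mapsto\al_s(\be_t(x))$ is continuous as $\al$ composed with $F$ along these open sets. The step I expect to be the real obstacle is the domain bookkeeping in the third axiom: at each composition one must certify that the partial map really is defined at the point in question, and this is exactly where both commutativity conditions and the $\al$-invariance of the sets $X^K_{t^{-1}}$ are used in an essential way—condition (i) to make $X_{(s,t)}$ well defined and those sets invariant, condition (ii) to perform the swap of $\al_s$ and $\be_{t'}$.
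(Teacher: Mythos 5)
Your proof is correct, and it splits cleanly into two halves whose relation to the paper differs. For continuity you are doing essentially what the paper does: the paper's proof introduces the projections $\pi_H,\pi_K$ and the map $(h,k,x)\mapsto (h,k,\be_k(x))$ defined on the open set $\pi_K^{-1}(\Gamma_{\be})$, and exhibits $\Gamma_{\al\times\be}$ as a preimage of open sets; your map $F((s,t),x)=(s,\be_t(x))$ on $H\times\Gamma_{\be}$ with $\Gamma_{\al\times\be}=F^{-1}(\Gamma_{\al})$ is the same device, marginally leaner since you discard the $K$-coordinate rather than carry it along. Where you genuinely diverge is the algebraic half: the paper does not verify the axioms of Definition \ref{definicion ap} at all, declaring them ``an easy consequence'' and citing \cite{Fernandophd} (Proposi\c c\~ao 4.35), so your inline verification of axiom (3) supplies exactly what that citation hides. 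That verification is sound, and you correctly identified the delicate point: applying commutativity (ii) at $z=\al_{s^{-1}}(y)$ is legitimate precisely because $y\in X^H_s\cap X^K_{t'^{-1}}$ places $z$ in the set $\al_{s^{-1}}(X^H_s\cap X^K_{t'^{-1}})$ on which (ii) holds, with the paper's stated reading that computability of $\be_{t'}\al_s$ there entails computability of $\al_s\be_{t'}$ (this is what licenses writing $\al_s(\be_{t'}(z))$, i.e.\ $v\in X^H_{s^{-1}}$, which you assert but could have flagged as following from (i) applied with $s^{-1},t'$); and the $\al$-invariance of $X^K_{t'^{-1}}$, noted in the paper immediately after the definition of commuting actions, is indeed what certifies $z\in X^K_{t'^{-1}}$. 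In short: your version buys self-containedness, the paper buys brevity by outsourcing the bookkeeping to the thesis; the continuity arguments coincide.
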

\begin{proof}
The fact that $\mu:=\al\times \be$ is a partial action (not necessarily
continuous) is an easy consequence of the fact that $\al$ and $\be$ are
commuting partial actions (\cite{Fernandophd} Proposi\c c\~ao 4.35). We just
have to deal with the continuity.

To show $\Gamma_{\mu}$ is open in $G\times X$ notice that the set
$\Gamma^{-1}_{\be}:=\{ (t,x)\in K\times X\ | \ x\in X^K_t \}$ is open in
$K\times X$, and define:
\begin{gather}
\pi_H:H\times K\times X\to H\times X,\ (h,k,x)\mapsto (h,x), \notag \\
\pi_K:H\times K\times X\to K\times X,\ (h,k,x)\mapsto (k,x),\textrm{ and} \notag
\\
F: \pi_K^{-1}(\Gamma_{\be})\to \pi_K^{-1}(\Gamma^{-1}_{\be}), \ (h,k,x)\mapsto
(h,k,\be_k(x)). \notag
\end{gather}

It is easy to see that the three functions are continuous. So, the domain and
range of $F$ are open in $H\times K\times X$ and
$\Gamma_{\mu}=F^{-1}(\pi_K^{-1}(\Gamma^{-1}_{\be})\cap \pi_H^{-1}(\Gamma_{\al})
)$ is open in $G\times X$. Finally, the continuity of $\mu:\Gamma_{\mu}\to X$
follows from that of $\al:\Gamma_{\al}\to X$ and $\be:\Gamma_{\be}\to X$.
\end{proof}

Here is another property of partial actions we will use.

\begin{lemma}\label{accion en espacio de orbitas}
If $\al$ and $\be$ commute then there is a partial action of $H$ on $X/K$,
called $\what{\al}$, such that for every $s\in H$
\begin{enumerate}
\item $(X/K)_s:=KX^H_s$ and
\item $\what{\al}_s(Kx)=K\al_s(x)$ for any $x\in X^H_{s^{-1}}$.
\end{enumerate}
\end{lemma}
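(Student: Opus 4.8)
The plan is to transport $\al$ along the orbit map $q\colon X\to X/K$, $x\mapsto Kx$, which we recall is continuous, open and surjective. For each $s\in H$ I would take as domain the set $(X/K)_s:=KX^H_s$ and define the candidate map by $\what{\al}_s(Kx):=K\al_s(x)$ for $x\in X^H_{s^{-1}}$. The whole lemma then reduces to three things: that these data make sense set-theoretically (openness of the domains, well-definedness of $\what{\al}_s$), that they satisfy the three axioms of a partial action, and that $\what{\al}$ is continuous.

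The single fact that unlocks everything is the $K$-invariance of each $X^H_s$. This is the exact mirror of the Remark preceding the statement: using commutativity condition (i) one has $\be_t(X^K_{t^{-1}}\cap X^H_s)=\al_s(X^H_{s^{-1}}\cap X^K_t)\subset X^H_s$ for every $t\in K$, so $X^H_s$ is $\be-$invariant, i.e. saturated for the $K$-action. Two consequences follow immediately. First, $KX^H_s$ coincides with the image $q(X^H_s)$, which is open because $X^H_s$ is open (a slice of the open set $\Gamma_\al$) and $q$ is open; and $(X/K)_e=q(X)=X/K$. Second, $q^{-1}(q(X^H_s))=X^H_s$, so a class lies in $(X/K)_{s^{-1}}$ exactly when \emph{every} one of its representatives lies in $X^H_{s^{-1}}$.

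With saturation in hand I would check the algebraic structure. For well-definedness of $\what{\al}_s$, suppose $Kx=Ky$ with $x,y\in X^H_{s^{-1}}$ and write $y=\be_t(x)$, so $x\in X^K_{t^{-1}}$. Applying condition (i) with $t$ replaced by $t^{-1}$ gives $\al_s(x)\in \al_s(X^H_{s^{-1}}\cap X^K_{t^{-1}})=\be_{t^{-1}}(X^K_t\cap X^H_s)\subset X^K_{t^{-1}}$, hence $\al_s(x)\in X^H_s\cap X^K_{t^{-1}}$ and therefore $x\in \al_{s^{-1}}(X^H_s\cap X^K_{t^{-1}})$; this is precisely the hypothesis of condition (ii), which yields $\al_s(y)=\al_s\be_t(x)=\be_t\al_s(x)\in K\al_s(x)$, so $K\al_s(y)=K\al_s(x)$. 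The inverse of $\what{\al}_s$ is $\what{\al}_{s^{-1}}$ and $\what{\al}_e=\id$, giving axioms (1) and (2). For the composition axiom (3) I would choose representatives in the saturated domains: if $Kx\in(X/K)_{s^{-1}}$ and $\what{\al}_s(Kx)\in(X/K)_{r^{-1}}$ then $x\in X^H_{s^{-1}}$ and $\al_s(x)\in X^H_{r^{-1}}$, so axiom (3) for $\al$ gives $x\in X^H_{(rs)^{-1}}$ and $\al_{rs}(x)=\al_r\al_s(x)$, which pushes down to $Kx\in(X/K)_{(rs)^{-1}}$ and $\what{\al}_{rs}(Kx)=\what{\al}_r\what{\al}_s(Kx)$.

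The main obstacle is continuity, and here the openness of $q$ is decisive. The map $\id\times q\colon H\times X\to H\times(X/K)$ is open, continuous and surjective, hence a quotient map. Using saturation one computes $(\id\times q)^{-1}(\Gamma_{\what{\al}})=\Gamma_\al$, which is open, so $\Gamma_{\what{\al}}$ is open. Restricting the quotient map $\id\times q$ to the saturated open set $\Gamma_\al$ yields a quotient map $Q\colon\Gamma_\al\to\Gamma_{\what{\al}}$, $(s,x)\mapsto(s,Kx)$, satisfying $\what{\al}\circ Q=q\circ\al$; since $q\circ\al$ is continuous and $Q$ is a quotient map, $\what{\al}$ is continuous. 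Thus the crux of the argument is the $K$-invariance established at the outset, on which both saturation identities $q^{-1}(q(X^H_s))=X^H_s$ and $(\id\times q)^{-1}(\Gamma_{\what{\al}})=\Gamma_\al$ depend.
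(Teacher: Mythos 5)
Your proposal is correct and takes essentially the same route as the paper: both transport $\al$ along the open orbit map $q$, realize $\Gamma_{\what{\al}}$ as the open image of $\Gamma_{\al}$ under $(s,x)\mapsto (s,Kx)$, deduce continuity of $\what{\al}$ from the universal property of this quotient map, and invoke the $\be$-invariance (saturation) of the sets $X^H_s$ for the partial-action axioms. You simply make explicit the well-definedness step via commutativity conditions (i) and (ii), which the paper compresses into the assertion that $S$ is constant on the classes of $\sim$.
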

\begin{proof}
The first step is to show we can define $\what{\al}$ as in (1) and (2). Define
the function $F:\Gamma_{\al}\to H\times X/K$ as $F(s,x)=(s,Kx)$. This map is
open and continuous. The domain of $\what{\al}$ will be the image of $F$,
$\Gamma_{\what{\al}}:=\Img(F),$ which is an open set.

Now define $S:\Gamma_{\al}\to X/K$ in such a way that $(s,x)\mapsto K\al_s(x)$,
and consider (on $\Gamma_{\al}$) the equivalence relation $u\sim v$ if
$F(u)=F(v)$. The function $S$ is constant in the classes of $\sim$, and the
quotient space $\Gamma_{\al}/\sim$ is homeomorphic to $\Gamma_{\what{\al}}$
trough the map defined by $F$. So, there is a unique continuous map
$\Gamma_{\what{\al}}\to X/K$ such that $(s,Kx)\mapsto K\al_s(x)$. This is the
partial action $\what{\al}$ we are looking for.

It remains to be shown that $\what{\al}$ is a partial action. Properties (1) and
(2) of Definition \ref{definicion ap} are easy to prove, for (3) recall every
$X^H_s$ is $\be-$invariant.
\end{proof}

Assume for a moment we have a continuous global action of $H$ on $X$. It is
immediate that it's domain, being equal to $H\times X$, is a closed and open
(clopen) set of $H\times X$. That is not always the case for partial actions.

\begin{definition}
A partial action, $\al$, of $H$ on $X$ has \textit{closed domain} if
$\Gamma_{\al}$ is closed in $H\times X$.
\end{definition}

\begin{lemma}\label{dominio cerrado equivalencias}
If $X$ is Hausdorff and $\al$ is continuous, the following conditions are
equivalent:
 \begin{enumerate}
 \item $\al$ has closed domain.
 \item The enveloping space $X^e$ is Hausdorff and $X$ is closed in $X^e$.
 \end{enumerate}
\end{lemma}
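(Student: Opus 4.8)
The plan is to realize $X^e$ as a topological quotient of $H\times X$ and then read off \emph{both} conditions in (2) from properties of a single quotient map. Concretely, I would consider $q\colon H\times X\to X^e$, $q(t,x)=\al^e_t(x)$, obtained by restricting the global continuous enveloping action $\al^e$ to the open subset $H\times X\subset H\times X^e$. Since $X^e=\cup\{\al^e_t(X):t\in H\}$ this map is surjective, and since the global action map $H\times X^e\to X^e$ is open (it factors as the shear homeomorphism $(t,y)\mapsto(t,\al^e_t(y))$ followed by the open projection onto $X^e$), its restriction to the open set $H\times X$ is again open. Hence $q$ is a continuous open surjection, i.e. a quotient map. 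The identity on which everything rests is that, because $\al$ is the restriction of $\al^e$ to $X$, one has $q^{-1}(X)=\Gamma_{\al}$.

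With this in hand the implication (2)$\Rightarrow$(1) is immediate: $q$ is continuous and $\Gamma_{\al}=q^{-1}(X)$, so if $X$ is closed in $X^e$ then $\Gamma_{\al}$ is closed in $H\times X$ (this direction does not even use Hausdorffness). For (1)$\Rightarrow$(2) I would split the two assertions. That $X$ is closed in $X^e$ follows from the quotient-map property: a set $S\subset X^e$ is closed iff $q^{-1}(S)$ is closed, and applying this to $S=X$ converts the hypothesis ``$\Gamma_{\al}=q^{-1}(X)$ is closed'' into ``$X$ is closed in $X^e$''. In fact this shows $X$ closed in $X^e$ $\iff$ $\Gamma_{\al}$ closed, recovering both directions of that half at once.

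The remaining point, and the main obstacle, is to deduce that $X^e$ is Hausdorff from the closedness of $\Gamma_{\al}$. Here I would invoke the classical fact that for an open surjection $q$ the quotient $X^e$ is Hausdorff iff the kernel pair $R=\{(u,v)\in(H\times X)^2:q(u)=q(v)\}$ is closed in $(H\times X)^2$ (the nontrivial direction separates $q(u)\neq q(v)$ by a basic open box disjoint from $R$ and pushes it forward with the open map $q$). Thus it suffices to show $R$ is closed. First I would check that $\gr(\al)$ is closed in $H\times X\times X$: as the graph of the continuous map $\al\colon\Gamma_{\al}\to X$ into the Hausdorff space $X$, it is closed in $\Gamma_{\al}\times X$, and since $\Gamma_{\al}$ is closed in $H\times X$ the set $\Gamma_{\al}\times X$ is closed in $H\times X\times X$, whence $\gr(\al)$ is closed there. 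Then I would exhibit $R$ as a preimage of $\gr(\al)$: the continuous map $\Psi\colon(H\times X)^2\to H\times X\times X$, $((t,x),(s,y))\mapsto(s^{-1}t,x,y)$, satisfies $R=\Psi^{-1}(\gr(\al))$, because $\al^e_t(x)=\al^e_s(y)$ is equivalent to $(s^{-1}t,x)\in\Gamma_{\al}$ together with $\al_{s^{-1}t}(x)=y$. Hence $R$ is closed and $X^e$ is Hausdorff, completing (1)$\Rightarrow$(2).

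I anticipate the delicate steps to be the verification that $q$ is open (needing the shear-homeomorphism description of the global action map together with the fact that $H\times X$ is open in $H\times X^e$) and the bookkeeping of inverses in the identity $R=\Psi^{-1}(\gr(\al))$; the Hausdorffness criterion for open quotients is standard and can simply be cited.
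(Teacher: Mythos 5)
Your proof is correct, and while its skeleton matches the paper's, the two nontrivial parts of (1)$\Rightarrow$(2) are handled by a genuinely different mechanism. The closed-graph step is identical in substance: the paper also writes $\gr(\al)$ as the preimage of the diagonal of the Hausdorff space $X$ inside the closed set $F^{-1}(\Gamma_\al)$. But from there the paper simply \emph{cites} Proposition 1.2 of \cite{Fernando} for ``closed graph $\Rightarrow X^e$ Hausdorff'', and proves closedness of $X$ in $X^e$ by a separate net argument (given a net in $X$ converging to $x\in X^e$, choose $t$ with $\al^e_t(x)\in X$ and use closedness of $\Gamma_\al$ to get $x=\al_{t^{-1}}\al^e_t(x)\in X$); its (2)$\Rightarrow$(1) is your observation $\Gamma_\al=(H\times X)\cap(\al^e)^{-1}(X)$, i.e.\ just continuity of your map $q$. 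You instead inline the cited result: realizing $X^e$ as the image of the continuous open surjection $q(t,x)=\al^e_t(x)$ --- which is in fact how the enveloping space is constructed in \cite{Fernando}, as a quotient of $H\times X$ by the relation you call $R$ --- you reduce Hausdorffness to closedness of the kernel pair and verify $R=\Psi^{-1}(\gr(\al))$, which is exactly the hidden content of the citation. Your delicate steps all check out: $q$ is open because $X$ is open in $X^e$ (guaranteed by the enveloping-action construction) and the global action map factors through the shear homeomorphism, and the identity $\al^e_t(x)=\al^e_s(y)\iff (s^{-1}t,x,y)\in\gr(\al)$ for $x,y\in X$ holds because $\al$ is the restriction of $\al^e$ to $X$. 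What each approach buys: the paper's proof is shorter given the literature; yours is self-contained, and the single identity $\Gamma_\al=q^{-1}(X)$ with $q$ an open quotient map gives ``$X$ closed in $X^e$ $\iff$ $\Gamma_\al$ closed'' in one stroke, subsuming both the paper's net argument and its proof of (2)$\Rightarrow$(1), while cleanly separating that equivalence (which needs no Hausdorffness at all) from the Hausdorffness of $X^e$ (which is where the hypothesis on $X$ enters, via the closed graph).
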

\begin{proof}
We start by proving (1)$\Rightarrow$(2). Recall \cite{Fernando} $X^e$ is
Hausdorff if $\al$ has closed graph. Consider the function $F: H\times
X\times X\to H\times X,\ (s,x,y)\mapsto (s,x).$ The set $F^{-1}(\Gamma_\al)$ is
closed in $H\times X\times X$. Now, $\gr(\al)$ is closed in $F^{-1}(\Gamma_\al)$
because it is the pre-image of the diagonal $\{(x,x)\ |\ x\in X\}\subset X\times
X$ by the continuous function $F^{-1}(\Gamma_\al)\to X\times X,$ $(t,x,y)\mapsto
(\al_t(x),y).$ This implies $\al$ has closed graph.

To show $X$ is closed in $X^e$ take a net contained in $X$, $\{x_i\}_{i\in I},$
converging to a point $x\in X^e$. There exists $t\in H$ such that $\al^e_t(x)\in
X$. By the continuity of $\al^e$ there is an $i_0$ such that
$(t^{-1},\al^e_t(x_i))\in \Gamma_{\al}$ for $i\gqs i_0$. Then
$(t^{-1},\al^e_t(x)),$ being the limit of $\{(t^{-1},\al^e_t(x_i)) \}_{i\gqs
i_0}$, belongs to $\in\Gamma_{\al}$. Finally $x=\al_{t^{-1}}\al^e_t(x)\in X.$

For the converse notice three facts: the topology of $H\times X$ is the topology
relative to $H\times X^e$, $(\al^e)^{-1}(X)$ is closed in $H\times X^e$ and
$\Gamma_{\al}=H\times X\cap (\al^e)^{-1}(X)$. So we clearly have that
$\Gamma_{\al}$ is closed in $H\times X$.
\end{proof}

The previous lemma characterizes the continuous partial actions with closed
domain on Hausdorff spaces, as those arising as the restriction of a global
action on a Hausdorff space to a clopen set.

\begin{lemma}\label{dominio cerrado accion producto}
Given two continuous and commuting partial actions, both with closed domain, the
partial action of the product group (as defined on Lemma \ref{accion producto})
has closed domain.
\end{lemma}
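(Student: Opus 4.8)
The plan is to read off the domain $\Gamma_\mu$ of $\mu:=\al\times\be$ directly from the proof of Lemma~\ref{accion producto} and to exhibit it as the preimage of a closed set under a continuous map defined on a closed subset of $H\times K\times X$. Unwinding the condition ``$x\in X_{(s,t)^{-1}}$'' via $\mu_{(s,t)}=\al_s\circ\be_t$, a triple $(s,t,x)$ lies in $\Gamma_\mu$ exactly when we can first apply $\be_t$ and then $\al_s$, i.e.\ when $(t,x)\in\Gamma_\be$ and $(s,\be_t(x))\in\Gamma_\al$. In the notation of that proof this is precisely
\[
\Gamma_\mu=F^{-1}\bigl(\pi_K^{-1}(\Gamma^{-1}_\be)\cap\pi_H^{-1}(\Gamma_\al)\bigr),
\]
where $F$ is continuous on its domain $\pi_K^{-1}(\Gamma_\be)$.

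Next I would bring in the hypothesis. By definition ``closed domain'' means $\Gamma_\al$ is closed in $H\times X$ and $\Gamma_\be$ is closed in $K\times X$; since both actions are continuous these sets are also open, hence clopen. Consequently $\pi_H^{-1}(\Gamma_\al)$ is closed (continuous preimage of a closed set) and the domain $\pi_K^{-1}(\Gamma_\be)$ of $F$ is clopen in $H\times K\times X$. Moreover $\Gamma^{-1}_\be$ is the image of the closed set $\Gamma_\be$ under the self-inverse homeomorphism $(t,x)\mapsto(t^{-1},x)$ of $K\times X$, so it too is closed, whence $\pi_K^{-1}(\Gamma^{-1}_\be)$ is closed. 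Thus the intersection $\pi_K^{-1}(\Gamma^{-1}_\be)\cap\pi_H^{-1}(\Gamma_\al)$ is closed, and by continuity of $F$ its preimage $\Gamma_\mu$ is closed in the domain $\pi_K^{-1}(\Gamma_\be)$ of $F$. As that domain is itself closed in $H\times K\times X$, a set closed in it is closed in the whole space, and therefore $\Gamma_\mu$ is closed, as required.

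The argument is short precisely because the decomposition of $\Gamma_\mu$ was already prepared in Lemma~\ref{accion producto}; the only points needing a moment's care are the two-step ``closed inside a closed subspace is closed'' conclusion and the verification that $\Gamma^{-1}_\be$ inherits closedness from $\Gamma_\be$, so I expect no serious obstacle. Should one wish to bypass the clopen/subspace bookkeeping, the same fact admits a direct net proof: given $(s_i,t_i,x_i)\in\Gamma_\mu$ converging to $(s,t,x)$, closedness of $\Gamma_\be$ gives $(t,x)\in\Gamma_\be$, continuity of $\be$ on $\Gamma_\be$ yields $\be_{t_i}(x_i)\to\be_t(x)$, and then closedness of $\Gamma_\al$ applied to the net $(s_i,\be_{t_i}(x_i))$ forces $(s,\be_t(x))\in\Gamma_\al$, so $(s,t,x)\in\Gamma_\mu$. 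It is worth noting that neither form of the argument requires $X$ to be Hausdorff.
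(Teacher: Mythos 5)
Your proof is correct and is essentially the paper's own: the paper's one-line proof says to rerun the argument of Lemma~\ref{accion producto} with ``open'' replaced by ``closed'', and you carry out exactly that, using the same decomposition $\Gamma_{\mu}=F^{-1}\bigl(\pi_K^{-1}(\Gamma^{-1}_{\be})\cap\pi_H^{-1}(\Gamma_{\al})\bigr)$ while supplying the details the paper leaves implicit (closedness of $\Gamma^{-1}_{\be}$ via $(t,x)\mapsto(t^{-1},x)$, and that a set closed in the closed domain $\pi_K^{-1}(\Gamma_{\be})$ of $F$ is closed in $H\times K\times X$). Your alternative net argument is a correct equivalent reformulation of the same idea.
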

\begin{proof}
In the proof of Lemma \ref{accion producto} we showed $\Gamma_{\al\times\be}$ is
open, use the same arguments changing the word ``open" for ``closed".
\end{proof}

A dynamical system (DS for short) is a tern $(Y,H,\be)$ where $\be$ is a
continuous action of $H$ on $Y$, where $H$ and $Y$ are LCH. The natural
extension to partial actions is the following one.

\begin{definition}
The tern $(X,H,\al)$ is a \textit{partial dynamical system} (PDS) if $\al$ is a
continuous partial action of $H$ on $X$ and both ($H$ and $X$) are LCH.
\end{definition}

\begin{lemma}\label{envolvente es SD sii grafico cerrado}
If $(X,H,\al)$ is a PDS then $(X^e,H,\al^e)$ is a DS if and only if $\al$ has
closed graph.
\end{lemma}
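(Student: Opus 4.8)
The plan is to reduce the statement to a purely topological question about $X^e$, and then settle that question with a diagonal/preimage argument. Since $\al^e$ is, by construction, a continuous \emph{global} action of $H$ on $X^e$ and $H$ is LCH, the tern $(X^e,H,\al^e)$ is a DS exactly when $X^e$ is LCH. First I would observe that local compactness of $X^e$ is automatic: recalling $X^e=\cup\{\al^e_t(X)\ |\ t\in H\}$, each $\al^e_t(X)$ is open in $X^e$ and $\al^e_t:X\to \al^e_t(X)$ is a homeomorphism, so every point of $X^e$ lies in an open set homeomorphic to the LCH space $X$ and hence has a compact neighbourhood in $X^e$ (compactness of a subset being independent of the ambient separation). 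Therefore $(X^e,H,\al^e)$ is a DS if and only if $X^e$ is Hausdorff, and the whole lemma reduces to the equivalence: $X^e$ is Hausdorff $\iff$ $\al$ has closed graph.

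One direction is already available. As recalled in the proof of Lemma \ref{dominio cerrado equivalencias}, \cite{Fernando} gives that $X^e$ is Hausdorff whenever $\al$ has closed graph. So the only thing left to prove is the converse: if $X^e$ is Hausdorff then $\gr(\al)$ is closed in $H\times X\times X$.

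For that converse I would use the fact that $X$ is open in $X^e$ and that $\al$ is the restriction of $\al^e$ to $X$, which yields the identity $\gr(\al)=\{(t,x,y)\in H\times X\times X\ |\ \al^e_t(x)=y\}$. Indeed, if $x,y\in X$ and $\al^e_t(x)=y$, then $x\in X\cap \al^e_{\tmu}(X)=X_{\tmu}$, so $(t,x)\in\Gamma_{\al}$ and $\al_t(x)=\al^e_t(x)=y$; the reverse inclusion is immediate from $\al_t=\al^e_t|_{X_{\tmu}}$. Now I regard $H\times X\times X$ as a subspace of $H\times X^e\times X^e$ and consider the map $\Phi:H\times X^e\times X^e\to X^e\times X^e$, $(t,x,y)\mapsto(\al^e_t(x),y)$, which is continuous because $\al^e$ is. If $X^e$ is Hausdorff its diagonal $\Delta$ is closed, hence $\Phi^{-1}(\Delta)$ is closed in $H\times X^e\times X^e$; intersecting with $H\times X\times X$ gives precisely $\gr(\al)$ by the identity above, so $\gr(\al)$ is closed. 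This closes the equivalence and proves the lemma.

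I expect the main obstacle to be the bookkeeping in the key identity $\gr(\al)=\{(t,x,y)\in H\times X\times X\ |\ \al^e_t(x)=y\}$: one must check carefully that the intrinsic closed-graph condition, phrased in coordinates of $X$ and $\Gamma_{\al}$, coincides with the extrinsic ``preimage of the diagonal'' description inherited from the enveloping space, using that $X$ is open and that $X_{\tmu}=X\cap\al^e_{\tmu}(X)$. Once this identity and the automatic local compactness of $X^e$ are in place, the rest is routine.
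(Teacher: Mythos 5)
Your proof is correct, and its first half is exactly the paper's reduction: since $\al^e$ is a continuous global action and $H$ is LCH, $(X^e,H,\al^e)$ is a DS iff $X^e$ is LCH, and local compactness is automatic because every point of $X^e$ has an open neighbourhood homeomorphic to $X$ (the paper invokes Theorem 1.1 of \cite{Fernando} for the same fact), so everything reduces to Hausdorffness of $X^e$. Where you diverge is in the second half. The paper settles the remaining equivalence in one stroke by citing Proposition 1.2 of \cite{Fernando}, which asserts that $X^e$ is Hausdorff if and only if $\al$ has closed graph; you instead cite only the implication (closed graph $\Rightarrow X^e$ Hausdorff) and prove the converse from scratch. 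Your converse argument checks out: the identity $\gr(\al)=\{(t,x,y)\in H\times X\times X\ |\ \al^e_t(x)=y\}$ is justified by $X_{\tmu}=X\cap\al^e_{\tmu}(X)$ and $\al_t=\al^e_t|_{X_{\tmu}}$, and writing $\gr(\al)=(H\times X\times X)\cap\Phi^{-1}(\Delta)$ with $\Phi(t,x,y)=(\al^e_t(x),y)$ continuous and the diagonal $\Delta$ closed by Hausdorffness of $X^e$ gives closedness in $H\times X\times X$, whose product topology agrees with the subspace topology inherited from $H\times X^e\times X^e$. What the paper's route buys is brevity, at the cost of leaning entirely on the external reference; what yours buys is a self-contained proof of the direction that is otherwise invisible in this paper (the other direction is at least quoted in the proof of Lemma \ref{dominio cerrado equivalencias}). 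It is also worth noting that your preimage-of-the-diagonal device is the same one the paper deploys inside Lemma \ref{dominio cerrado equivalencias} — there with the diagonal of $X$, closed because $X$ is assumed Hausdorff — so your argument can be seen as transplanting that technique to the enveloping space, where Hausdorffness of $X^e$ is the hypothesis rather than a conclusion.
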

\begin{proof}
By Theorem 1.1. of \cite{Fernando} every point of $X^e$ has a neighbourhood
homeomorphic to $X$. So, every point of $X^e$ has a local basis of compact
neighbourhoods. As $\al$ is continuous and $H$ is LCH, $(X^e,H,\al^e)$ is a DS
if and only if $X^e$ is Hausdorff. By Proposition 1.2. of \cite{Fernando} $X^e$
is Hausdorff if and only if $\al$ has closed graph.
\end{proof}

A PDS $(X,H,\al)$ is \textit{proper} if the function $F_{\al}:\Gamma_{\al}\to
X$, $(t,x)\mapsto (x,\al_t(x))$, is proper (the pre-image of a compact set is
compact). This definition, and part of the next Lemma, are taken from
\cite{Fernandophd}.

\begin{lemma}\label{equivalencia ap propia}
Given a PDS $(X,H,\al),$ the following statements are equivalent:
\begin{enumerate}
\item The system is proper.
\item Every net contained in $\Gamma_{\al}$, $\{(t_i,x_i)\}_{i\in I}$, such that
$\{(x_i,\al_{t_i}(x_i))\}_{i\in I}$ converges to some point of $X\times X$, has
a subnet converging to a point of $\Gamma_{\al}$.
\item $\al$ has closed graph and the enveloping DS $(X^e,H,\al^e)$ is proper.
\end{enumerate}
\end{lemma}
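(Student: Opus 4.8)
The plan is to run the cycle of implications by repeatedly invoking the net characterization of proper maps: a continuous map $f\colon Y\to Z$ into an LCH space $Z$ is proper if and only if every net $\{y_i\}$ in $Y$ for which $\{f(y_i)\}$ converges in $Z$ admits a subnet converging in $Y$. Since $X$ is LCH, $X\times X$ is LCH, so applying this criterion to $f=F_\al$ gives (1)$\Leftrightarrow$(2) immediately: condition (2) is precisely the net condition for $F_\al$, with ``converging to a point of $\Gamma_\al$'' being convergence in the domain $Y=\Gamma_\al$.

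For (1)/(2)$\Rightarrow$(3) I would proceed in two steps. First I show $\al$ has closed graph: given a net $(t_i,x_i,\al_{t_i}(x_i))$ in $\gr(\al)$ converging to $(t,x,y)$ in $H\times X\times X$, the projections give $(x_i,\al_{t_i}(x_i))\to(x,y)$, so by (2) a subnet of $(t_i,x_i)$ converges to some $(t',x')\in\Gamma_\al$; Hausdorffness of $H\times X$ forces $(t',x')=(t,x)\in\Gamma_\al$, and continuity of $\al$ with uniqueness of limits yields $y=\al_t(x)$, whence $(t,x,y)\in\gr(\al)$. By Lemma \ref{envolvente es SD sii grafico cerrado} the enveloping DS $(X^e,H,\al^e)$ is then a genuine DS, and in particular $X^e$ is LCH (Hausdorff from closed graph, locally compact from the local homeomorphism with $X$). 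Second, I verify the net condition for the global map $\Phi\colon H\times X^e\to X^e\times X^e$, $(t,y)\mapsto(y,\al^e_t(y))$. Given $(t_i,y_i)$ with $(y_i,\al^e_{t_i}(y_i))\to(y,z)$, I use $X^e=\cup\{\al^e_t(X)\ |\ t\in H\}$ to choose $a,b\in H$ with $\al^e_a(y),\al^e_b(z)\in X$; since $X$ is open and the maps $\al^e_g$ are homeomorphisms, $x_i:=\al^e_a(y_i)$ and $\al^e_b\al^e_{t_i}(y_i)=\al^e_{bt_ia^{-1}}(x_i)$ eventually lie in $X$ and converge inside $X$. Writing $s_i:=bt_ia^{-1}$, the pairs $(s_i,x_i)$ lie in $\Gamma_\al$ with $(x_i,\al_{s_i}(x_i))$ convergent in $X\times X$, so (2) produces a subnet with $s_i\to s_0$ and $x_i\to x_0\in\Gamma_\al$; then $t_i=b^{-1}s_ia\to b^{-1}s_0a$ and $y_i=\al^e_{a^{-1}}(x_i)\to\al^e_{a^{-1}}(x_0)$, giving the required convergent subnet. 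As $X^e\times X^e$ is LCH, the net condition makes $\Phi$ proper, i.e. the enveloping DS is proper.

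For (3)$\Rightarrow$(1) I argue through the net condition for $F_\al$. Assuming closed graph, $X^e$ is LCH and $\al^e$ is proper. I take $(t_i,x_i)\in\Gamma_\al$ with $(x_i,\al_{t_i}(x_i))\to(x,y)\in X\times X$; viewing everything inside $X^e$ (where $X$ carries the subspace topology), $(x_i,\al^e_{t_i}(x_i))\to(x,y)$ in $X^e\times X^e$, so properness of $\al^e$ yields a subnet with $(t_i,x_i)\to(t_0,x_0)\in H\times X^e$. Uniqueness of limits gives $x_0=x\in X$ and, by continuity of $\al^e$, $\al^e_{t_0}(x_0)=y\in X$; hence $x_0\in X_{t_0^{-1}}$, that is $(t_0,x_0)\in\Gamma_\al$, which is exactly condition (2).

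The main obstacle is Step B above: transporting an arbitrary convergent configuration in the enveloping space back into the open set $X$ where the partial action actually lives, and tracking the substitution $s_i=bt_ia^{-1}$ carefully enough that convergence of the translated group elements $s_i$ converts back into convergence of the original $t_i$. A secondary but essential point is to confirm that $X^e$ is genuinely LCH before any net characterization of properness is applied in the enveloping system, which is why the closed-graph conclusion must be extracted first.
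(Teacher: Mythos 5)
Your proposal is correct, but it is worth noting that the paper itself does not display a proof of this lemma: it defers (1)$\Leftrightarrow$(3) to Proposi\c c\~ao 4.62 of \cite{Fernandophd} and (1)$\Leftrightarrow$(2) to Lemma 3.42 of \cite{Williams}. Your (1)$\Leftrightarrow$(2) is precisely the Williams-style net characterization of proper maps into LCH spaces applied to $F_\al:\Gamma_\al\to X\times X$, so on that half you coincide with the argument the paper points to; what is genuinely yours, relative to the visible text, is the self-contained proof of the equivalence with (3). That part is sound, and its two delicate points are handled in the right order: you first extract closed graph from (2), which via Lemma \ref{envolvente es SD sii grafico cerrado} makes $(X^e,H,\al^e)$ a DS with $X^e$ LCH, and only then invoke the net criterion in $X^e\times X^e$ (the criterion needs a Hausdorff, locally compact target, so doing this in the opposite order would be circular); and your conjugation trick $s_i=bt_ia^{-1}$, which transports an arbitrary convergent configuration in the enveloping system into $\Gamma_\al$ using $X^e=\bigcup_{t\in H}\al^e_t(X)$, correctly converts the convergent subnet of $(s_i,x_i)$ supplied by (2) back into a convergent subnet of $(t_i,y_i)$, since $\al^e_{a^{-1}}$ and the group operations are continuous. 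The only tacit ingredients you should make explicit are the facts, from Theorem 1.1 of \cite{Fernando} (used throughout the paper), that $X$ is open in $X^e$ and carries the subspace topology and that $\al$ is the restriction of $\al^e$ to $X$: these are what guarantee that $x_i=\al^e_a(y_i)$ eventually lies in $X$, that $x_i\in X$ together with $\al^e_{s_i}(x_i)\in X$ gives $x_i\in X_{s_i^{-1}}=X\cap\al^e_{s_i^{-1}}(X)$, and that convergence in $X^e$ to a point of $X$ is convergence in $X$, so condition (2) really applies. In exchange for redoing what the thesis citation covers, your route makes the lemma self-contained, with no loss of correctness.
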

\begin{proof}
The equivalence between (1) and (3) is proved in \cite{Fernandophd}
(Proposic\~ao 4.62). The equivalence between (1) and (2) is proved as in
Lemma 3.42 of \cite{Williams}.
\end{proof}

It is a known fact that the orbit space of a proper DS is a LCH space, this is
also true for PDS.

\begin{lemma}
If $(X,H,\al)$ is a proper PDS then $X/H$ is a LCH space.
\end{lemma}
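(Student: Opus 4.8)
The plan is to transport the problem to the enveloping dynamical system, where the corresponding statement is already available. Since $(X,H,\al)$ is proper, Lemma \ref{equivalencia ap propia} (in the form (1)$\Rightarrow$(3)) tells us that $\al$ has closed graph and that the enveloping system $(X^e,H,\al^e)$ is proper; in particular, by Lemma \ref{envolvente es SD sii grafico cerrado}, $(X^e,H,\al^e)$ is a genuine DS, so $X^e$ is LCH. The text already records that $X/H\to X^e/H$, $\al Hx\mapsto\al^e Hx$, is a homeomorphism. Thus it suffices to know that $X^e/H$ is LCH, and this is precisely the known global fact about orbit spaces of proper dynamical systems quoted just before the statement. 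Composing these facts yields that $X/H$ is LCH.

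If instead one prefers a self-contained argument that avoids the global result, I would establish local compactness and the Hausdorff property separately. For local compactness, recall that the canonical projection $q\colon X\to X/H$ is continuous, surjective and open. Given a class $q(x)\in X/H$, choose a compact neighbourhood $K$ of $x$ in the LCH space $X$; then $q(K)$ is compact and, since $q$ is open, $q(\operatorname{int}K)$ is an open set with $q(x)\in q(\operatorname{int}K)\subset q(K)$. Hence every point of $X/H$ has a compact neighbourhood.

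For the Hausdorff property I would use the openness of $q$ together with properness. Because $q$ is an open surjection, $X/H$ is Hausdorff if and only if the orbit equivalence relation $R:=\{(x,y)\in X\times X\ |\ Hx=Hy\}$ is closed in $X\times X$. Now $R=\Img(F_\al)$, where $F_\al\colon\Gamma_\al\to X\times X$, $(t,x)\mapsto(x,\al_t(x))$, is the proper map defining properness of the system. Since $X$ is LCH, so is $X\times X$, and a continuous proper map into a LCH space has closed image; therefore $\Img(F_\al)=R$ is closed, and $X/H$ is Hausdorff.

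The routine parts are local compactness and the bookkeeping with the homeomorphism $X/H\cong X^e/H$. The one point that needs care is the closedness of the orbit relation: the key input is that a continuous proper map into a locally compact Hausdorff space is a closed map, applied to $F_\al$. This is where properness of $\al$ is genuinely used, and it is the main obstacle to a fully elementary argument.
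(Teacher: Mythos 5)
Your first paragraph is, almost word for word, the paper's own proof: Lemma \ref{equivalencia ap propia} gives that $(X^e,H,\al^e)$ is a proper DS, the quoted global fact gives that $X^e/H$ is LCH, and the recorded homeomorphism $X/H\cong X^e/H$ finishes; that part is correct and identical in approach. Your alternative, self-contained argument is also correct and takes a genuinely different route, working directly on $X$ rather than passing to the enveloping action. Both halves check out: local compactness of $X/H$ follows from openness of the orbit map $q$ exactly as you say, and for Hausdorffness the standard criterion (for an open quotient map, the quotient is Hausdorff if and only if the relation is closed) applies; the orbit relation $R$ is indeed $\Img(F_\al)$ (note the paper's definition of $F_\al$ misprints the codomain, which should be $X\times X$, as you implicitly correct), and a proper continuous map into the LCH space $X\times X$ is closed, so $R$ is closed. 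Equivalently, you could close the relation using condition (2) of Lemma \ref{equivalencia ap propia} directly: if $(x_i,\al_{t_i}(x_i))\to(x,y)$, a subnet satisfies $(t_{i_j},x_{i_j})\to(t,z)\in\Gamma_\al$, whence $(x,y)=(z,\al_t(z))\in R$. As for what each approach buys: the paper's route is two lines because it leans on the enveloping-action machinery and on the cited-but-unproved fact about orbit spaces of proper global DS, whereas your direct route avoids both, isolates exactly where properness enters (closedness of the orbit relation) and where openness of $q$ enters (local compactness and the Hausdorff criterion), and would remain valid in settings where one prefers not to invoke the enveloping system.
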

\begin{proof}
By the previous Lemma $(X^e,H,\al^e)$ is a proper DS. So, $X^e/H$ is LCH. But
$X/H$ is homeomorphic to $X^e/H$ and so is LCH.
\end{proof}

The next result follows immediately from the previous ones.

\begin{lemma}\label{orbitas para PDS}
Let $(X,H,\al)$ and $(X,K,\be)$ be commuting PDS (that is, $\al$ and $\be$
commute). If $\be$ is proper then $(X/K,H,\what{\al})$ is a PDS, where
$\what{\al}$ is the partial action defined on Lemma \ref{accion en espacio de
orbitas}.
\end{lemma}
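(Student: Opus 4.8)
The plan is to verify the three requirements in the definition of a PDS for the tern $(X/K,H,\what{\al})$: that $\what{\al}$ is a continuous partial action of $H$ on $X/K$, that $X/K$ is LCH, and that $H$ is LCH. The last of these holds by the standing assumption on $H$, so the work reduces to the first two.

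For the partial action and its continuity, I would simply invoke Lemma \ref{accion en espacio de orbitas}. Since $\al$ and $\be$ commute, that lemma already produces $\what{\al}$ and proves it is a partial action of $H$ on $X/K$; moreover its construction exhibits $\Gamma_{\what{\al}}$ as the image of the open map $F$ (hence open) and produces the continuous map $\Gamma_{\what{\al}}\to X/K$ realizing $\what{\al}$. Thus $\what{\al}$ is a continuous partial action, and no new argument is needed at this point.

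For the local compactness and Hausdorffness of $X/K$, I would apply the properness hypothesis on $\be$. The tern $(X,K,\be)$ is a proper PDS, so the previous Lemma applies verbatim with the roles of $H$ and $\al$ played by $K$ and $\be$, yielding that $X/K$ is LCH. Combining the three verified points gives that $(X/K,H,\what{\al})$ is a PDS.

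I do not expect a genuine obstacle here: the statement is an assembly of earlier results, exactly as the sentence preceding it announces. The only point deserving a moment's attention is confirming that the continuity of $\what{\al}$, together with the openness of $\Gamma_{\what{\al}}$, was genuinely established inside the proof of Lemma \ref{accion en espacio de orbitas} rather than merely asserted; a quick reading of that construction confirms it, so the remaining substantive content is precisely the LCH property of $X/K$ supplied by the properness of $\be$.
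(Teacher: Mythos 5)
Your proposal is correct and matches the paper exactly: the paper states this lemma with no written proof, remarking only that it ``follows immediately from the previous ones,'' namely Lemma \ref{accion en espacio de orbitas} for the continuous partial action $\what{\al}$ and the preceding (unnumbered) lemma, applied to the proper PDS $(X,K,\be)$, for $X/K$ being LCH. Your extra check that openness of $\Gamma_{\what{\al}}$ and continuity of $\what{\al}$ are genuinely established inside the proof of Lemma \ref{accion en espacio de orbitas} (not just asserted in its statement) is a sound and worthwhile verification.
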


\section{Partial actions on bundles of $C^*-$Algebras.}

The definition of upper semicontinuous $C^*-$bundle we are going to use is
Definition C.16 of \cite{Williams} (notice that we do not require the base space
to be Hausdorff). From now on $\Bf=\{B_x\}_{x\in X}$ and $\Cf=\{C_y\}_{y\in Y}$
will be upper semicontinuous $C^*-$bundles. The projections of $\Bf$ and $\Cf$
will be denoted $p:B\to X$ and $q:C\to Y$, respectively.

The set of continuous and bounded sections of the bundle $\Bf$ will be denoted
$C_b(\Bf)$ (this notation differs from that of \cite{Williams}). Similarly,
$C_0(\Bf)$ is the set of continuous sections vanishing at infinity (C.21
\cite{Williams}) and $C_c(\Bf)$ the set of continuous sections of compact
support. When $X$ is a LCH space, $C_b(\Bf)$ and $C_0(\Bf)$ are $C^*-$algebras
with the supremum norm and $C_c(\Bf)$ is a dense $*-$sub algebra of $C_0(\Bf)$.

\begin{definition}\label{definicion ap en fibrados}
A partial action of $H$ on $\Bf$ is a pair $(\al, \cdot )$, where $\al$ and
$\cdot$ are continuous partial actions of $H$ on $B$ and $X$, respectively,
satisfying
\begin{enumerate}
\item $p^{-1}(X_t)={}_tB$ for every $t\in H$. Here ${}_tB$ is the range of
$\al_t$.
\item $p$ is a morphism of partial actions (Definition 1.1 of \cite{Fernando}).
\item The restriction of $\al_t$ to a fiber is a morphism of $C^*-$algebras, for
each $t\in H$.
\end{enumerate}

Notice that $\cdot$ is determined by $\al$. For that reason, with abuse of
notation, we name $\al$ the pair $(\al,\cdot)$. We say $\al$ is global if the
partial action on the total space is a global action or, what is the same, if
$\cdot$ is global.

The domain of the partial action on the total and base space will be denoted
$\Gamma(B,\al)$ and $\Gamma(X,\al)$, respectively.
\end{definition}

\begin{example}\label{acion en fibrado trivial}
Let $(X,H,\cdot)$ be a PDS and $(A,H,\gamma)$ a $C^*-$DS, that is, $A$ is a
$C^*-$algebra and $\gamma:H\to Aut(A)$ is a strongly continuous action. With
such define the trivial bundle $p:A\times X\to X$, where $p(a,x)=x$. All the
fibers of this bundle, called $\Bf$, are isomorphic to $A$ by the maps $A\to
B_x$, $a\mapsto (a,x)$. We define a global action of $H$ on $\Bf$ by setting
$\al_t:A\times X_{t^{-1}}\to A\times X_t $, $(a,x)\mapsto (\gamma_t(a),t\cdot
x).$
\end{example}

I would like to emphasize that, from now on, we are going to use the letters
$\al$ and $\be$ for actions on total spaces. The actions on the base spaces will
be denoted $\cdot$ and $\star$. We will write $\al_t(a)$ and $t\cdot x$,
similarly with $\be$ and $\star$.

If $\be=(\be,\star)$ is a partial action of $H$ on $\Cf$, a morphism
$(F,f):\al\to\be$ is a pair of continuous functions, $F:B\to C$ and $f:X \to Y$,
such that: both are morphism of partial actions, $q\circ F=f\circ p$ and the
restriction of $F$ to each fiber is a morphism of $C^*-$algebras. Naturally, the
composition of morphisms is the composition of functions (on each coordinate).

Following \cite{Fernando} we can define the restriction of actions. Let
$\be=(\be,\star)$ be a global action of $H$ on $\Bf$ and $U$ an open subset of
$X$. Consider the restriction bundle $\Bf_U=\{B_u\}_{u\in U}$ with the partial
action $\be_U$, which is the pair formed by the restriction of the actions of
$H$ to $p^{-1}(U)$ and $U$. Notice we have obtained a partial action because
$p^{-1}(U)\cap \be_t(p^{-1}(U))=p^{-1}(U\cap t\star U )$, for every $t\in H$.

Rephrasing Theorem 1.1 of \cite{Fernando} we get

\begin{theorem}\label{construction of enveloping bundle}
For every continuous partial action $\al$ of $H$ on an upper semicontinuous
$C^*-$bundle $\Bf$, there exists a tern $(\iota_X,\iota_B,\al^e)$ such that
$\al^e$ is an action of $H$ on an upper semicontinuous $C^*-$bundle $\Bf^e$, and
$(\iota_X,\iota_B):\al\to \al^e$ is a morphism, such that for any morphism
$\psi:\al\to \be$, where $\be$ is an action of $H$ (on an upper semicontinuous
$C^*-$bundle), there exists a unique morphism $\psi^e:\al^e\to \be$ such that
$\psi^e\circ(\iota_X,\iota_B)=\psi$.

Moreover, the pair $(\iota_X,\iota_B,\al^e)$ is unique up to canonical
isomorphisms, and
\begin{enumerate}
\item $\iota_X(X)$ is open in $X^e$.
\item $(\iota_X,\iota_B): \al \to (\al^e)_{\iota_X(X)}$ is an isomorphism.
\item $X^e$ is the orbit of $\iota_X(X)$.
\item $\Bf^e$ is a continuous $C^*-$bundle if and only if $\Bf$ is.
\end{enumerate}
\end{theorem}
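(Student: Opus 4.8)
The plan is to reduce everything to the purely topological enveloping action, whose existence and universal property are Theorem 1.1 of \cite{Fernando} as recalled in Section \ref{Properties of Partial Actions}, and then to install the bundle structure by transport along orbits. First I would apply that result twice. Applied to the total-space action $\al$ on $B$ it yields an open embedding $\iota_B\colon B\to B^e$ and a global action $\al^e$ on $B^e$ with $B^e$ equal to the orbit of $\iota_B(B)$; applied to the base action $\cdot$ on $X$ it yields $\iota_X\colon X\to X^e$ and a global action $\cdot^e$. Since $p\colon B\to X$ is a morphism of partial actions (condition (2) of Definition \ref{definicion ap en fibrados}) into the global action $\cdot^e$, its universal property gives a unique equivariant $p^e\colon B^e\to X^e$ with $p^e\circ\iota_B=\iota_X\circ p$ and $p^e(\al^e_t(b))=t\cdot^e p^e(b)$. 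I then set $\Bf^e=\{(p^e)^{-1}(y)\}_{y\in X^e}$ with projection $p^e$; using $p^e\circ\iota_B=\iota_X\circ p$ and the openness of the embeddings one checks $(p^e)^{-1}(\iota_X(x))=\iota_B(B_x)$ for every $x\in X$.

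Next I would equip each fibre with a $C^*$-algebra structure. As $X^e$ is the orbit of $\iota_X(X)$, every $y\in X^e$ can be written $y=t\cdot^e\iota_X(x)$ with $x\in X$ and $t\in H$, and then equivariance gives $(p^e)^{-1}(y)=\al^e_t(\iota_B(B_x))$; thus $\al^e_t$ restricts to a bijection of $B_x\cong\iota_B(B_x)$ onto the fibre over $y$, and I transport the operations and norm of $B_x$ along it. \textbf{The step where I expect the real work to be} is showing that this structure is independent of the chosen presentation of $y$: if also $y=s\cdot^e\iota_X(x')$, then $\al^e_{s^{-1}}\circ\al^e_t=\al^e_{s^{-1}t}$ restricts, by the partial-action axioms, to the map $\al_{s^{-1}t}$ between the fibres $B_x$ and $B_{x'}$, which is a $C^*$-isomorphism by condition (3) of Definition \ref{definicion ap en fibrados}. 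Hence the two candidate structures coincide, the transport is canonical, and each $\al^e_t$ restricts to fibrewise $C^*$-isomorphisms.

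It remains to check that $\Bf^e$ is an upper semicontinuous $C^*$-bundle. Here I would use that, by Theorem 1.1 of \cite{Fernando}, the open sets $\al^e_t(\iota_B(B))$ cover $B^e$, each being homeomorphic to $B$ through the restriction of $\iota_B^{-1}\circ\al^e_{t^{-1}}$, a homeomorphism that by the previous paragraph is fibrewise a $C^*$-isomorphism. On each such piece the algebraic operations and the norm coincide with those of $\Bf$ pulled back by a fibre-preserving, fibrewise isometric homeomorphism; since continuity of the operations and upper semicontinuity of the norm are local conditions, they hold on all of $B^e$, so $\Bf^e$ is an upper semicontinuous $C^*$-bundle. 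The same local comparison, now applied to the norm, gives conclusion (4): continuity of $\Bf^e$ is equivalent to continuity of $\Bf$ because the transport maps are fibrewise isometries. By construction $\al^e$ is global and $(\iota_X,\iota_B)\colon\al\to\al^e$ is a morphism; conclusions (1) and (3) are inherited from the topological statement, and (2) follows from it together with the fibrewise $C^*$-isomorphisms built above.

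For the universal property, given a morphism $\psi=(F,f)\colon\al\to\be$ into a global action $\be$ on a bundle $\Cf$, I would extend $F\colon B\to C$ and $f\colon X\to Y$ separately by the topological universal property, obtaining $F^e$ and $f^e$, and set $\psi^e=(F^e,f^e)$. This is the required morphism: both $q\circ F^e$ and $f^e\circ p^e$ are equivariant extensions of $q\circ F=f\circ p$, so they agree by uniqueness, and $F^e$ is fibrewise a $C^*$-morphism because on each orbit piece it equals $F$ conjugated by the transport isomorphisms. Uniqueness of $\psi^e$, and the uniqueness of $(\iota_X,\iota_B,\al^e)$ up to canonical isomorphism, follow directly from the corresponding uniqueness in Theorem 1.1 of \cite{Fernando}.
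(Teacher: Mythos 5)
Your proposal is correct and follows essentially the same route as the paper: apply Theorem 1.1 of \cite{Fernando} to the total-space and base actions separately, obtain $p^e$ from the universal property, transport the $C^*$-structure along $\al^e_t\circ\iota_B$ with independence of the presentation guaranteed by condition (1) of Definition \ref{definicion ap en fibrados} and the fibrewise isomorphisms, and verify the bundle axioms locally on the open cover $\{\al^e_t(\iota_B(B))\}_{t\in H}$. You even correctly identified the independence-of-presentation step (equivalently, the paper's verification that $(p^e)^{-1}(\iota_X(X))=\iota_B(B)$) as the locus of the real work, and your coordinatewise treatment of the universal property matches what the paper defers to \cite{Fernando}.
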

\begin{proof}
Let $(\iota_X,\cdot^e)$ and $(\iota_B,\al^e)$ be the pairs given by Theorem 1.1
of \cite{Fernando} for $\cdot$ and $\al$. We also have a morphism $p^e:\al^e\to
\cdot^e$. Notice $p^e$ is surjective because is a morphism and the orbit of
$\iota_X(X)$ equals $X^e$. Again, as $B^e$ is the orbit of $\iota_B(B)$ and
$p^e$ is a morphism, to prove $p^e$ is open we only have to see that $p^e\circ
\al^e_t\circ \iota_B$ is open, for every $t\in H$. But this is true because, if
$U$ is open in $B$ \begin{equation*}  p^e\circ \al^e_t\circ \iota_B(U) =
t\cdot^e(\iota_X(p(U))), \end{equation*} the last being an open set.

We have proved $B^e$ fibers over $X^e$. We now give a structure of $C^*-$algebra
to each fiber of $B^e$. Let $x$ be an element of $X^e$, take $t\in H$ such that
$t\cdot^e x\in \iota_X(X)$ and define the $C^*-$structure on $B^e_x$ as the
unique making $\al^e_{t^{-1}}\circ\iota_B:B_{\iota_X^{-1}(t\cdot^e x)}\to B^e_x$
an isomorphism of $C^*-$algebras. This is independent of the choice of $t$
because $\al$ acts as isomorphism of $C^*-$algebras on the fibers of $\Bf.$

To prove the norm of $\Bf^e$ is semicontinuous notice that, given $\vep>0,$ the
set $\{b\in B^e:\ \|b\|<\vep \}$ equals the open set $\bigcup_{t\in H}
\al^e_t\circ\iota_B(\{b\in B:\ \|b\|<\vep\}).$ In fact, a similar argument shows
the norm of $\Bf^e$ is continuous if and only if the norm of $\Bf$ is
continuous. This suffices to prove property (4) of the thesis.

We now indicate how to prove the continuity of the product, for the other
operations there are analogous proofs. Set $D^e:=\{(a,b)\in B^e\times B^e:\
p^e(a)=p^e(b)\}$. We prove the continuity of $D^e\to B^e,$ $(a,b)\mapsto ab,$
locally. Fix $(a,b)\in D^e$, we may assume $p(a)=t\cdot^e x$ for some $x\in X$
and $t\in H$. The product is continuous on $(a,b)$ because
$U:=(\al_t\circ\iota_B(B)\times \al_t\circ\iota_B(B))\cap D^e$ is open in $D^e$,
and the restriction of the product to $U$ is the continuous function
\begin{equation*} (c,d)\mapsto \al^e_t\circ\iota_B\left[ 
(\iota_B)^{-1}\circ\al^e_{t^{-1}}(c)+(\iota_B)^{-1}\circ\al^e_{t^{-1}}(d)  
\right]. \end{equation*}

Up to here we have constructed an upper semicontinuous $C^*-$bundle
$\Bf^e=\{B^e_x\}_{x\in X^e}$. By the previous construction we also have that
$(\al^e,\iota^e)$ is a global action of $H$ on $\Bf^e$, and
$(\iota_X,\iota_B):\al\to\al^e$ is a morphism. Except for property (2) of the
thesis, everything follows immediately from the previous constructions and
Theorem 1.1 of \cite{Fernando}.

To prove property (2) it suffices to see that
$(p^e)^{-1}(\iota_X(X))=\iota_B(B)$. We clearly have the inclusion $\supset$,
for the other one let $b\in (p^e)^{-1}(\iota_X(X))$. We may suppose
$b=\al^e_t(\iota_B(c))$ for some $c\in B$ and $t\in H$. As
$p^e(b)=p^e(\al^e_t(\iota_B(c)))\in \iota_B(B)$, we have \begin{equation*}
p^e(b)=p^e(\al^e_t(\iota_B(c)))=t\cdot^ e\iota_X(p(c))\in \iota_X(X).
\end{equation*} So, $p(c)\in X_{t^{-1}}$. This implies $b=\iota_B(\al_t(c))\in
\iota_B(B)$.
\end{proof}

The non commutative analogue of PDS's are the $C^*-$PDS's, they are terns
$(A,G,\gamma)$ formed by a $C^*-$algebra $A$, a LCH group $G$ and a partial
action $\gamma$ of $G$ on $A$ (Definition 2.2 of \cite{Fernando}, for a more
general definition see \cite{Exel}).

We know every PDS gives us a $C^*-$PDS with commutative algebra
\cite{Fernando}. Following that construction, we are going to use partial
actions on upper semicontinuous $C^*-$bundles over LCH spaces to construct
partial actions on the $C^*-$algebras $C_0(\Bf)$. The ideals are of the form $
C_0(\Bf,U):=\{f\in C_0(\Bf)\ |\ f(x)=0_x\textrm{ if }x\notin U \} $, for open
sets $U\subset X$.

\begin{theorem}\label{cxPDS dado por p.a. en fibrado}
Let $X$ be a LCH space, $\Bf=\{B_x\}_{x\in X}$ an upper semicontinuous
$C^*-$bundle and $\al$ a continuous partial action of $H$ on $\Bf$. Then
$(C_0(\Bf),H,\wt{\al})$ is a $C^*-$PDS, where

\begin{enumerate}
\item $C_0(\Bf)_t=C_0(\Bf,X_t)$, for every $t\in H$.
\item If $f\in C_0(\Bf)_{t^ {-1}}$ then $\wt{\al}_t(f)(x)=\al_t(f(t^{-1}\cdot
x))$ if $x\in X_t$ and $0_x$ otherwise.
\end{enumerate}
\end{theorem}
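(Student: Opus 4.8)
The plan is to verify that $\wt{\al}$ satisfies the three axioms of a partial action on the $C^*$-algebra $C_0(\Bf)$, together with the continuity (strong continuity in $t$) required of a $C^*$-PDS. I would organize the work around the envelope: by Theorem \ref{construction of enveloping bundle} we have a genuine global action $\al^e$ of $H$ on the bundle $\Bf^e$ over the Hausdorff orbit space $X^e$, with $X$ open in $X^e$ and $\al$ the restriction. This gives, in the classical way, a global action $\wt{\al^e}$ of $H$ on $C_0(\Bf^e)$ by $\wt{\al^e}_t(g)(y)=\al^e_t\bigl(g(t^{-1}\cdot^e y)\bigr)$, which is genuinely an automorphism of $C_0(\Bf^e)$ since $\al^e$ is global and acts fiberwise as a $C^*$-isomorphism. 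The strategy is to exhibit $\wt{\al}$ as the restriction of $\wt{\al^e}$ to the ideal $C_0(\Bf)\cong C_0(\Bf^e,X)$, whence the partial-action axioms for $\wt{\al}$ will follow from the automorphism property of $\wt{\al^e}$ combined with the already-established partial-action structure on the base.

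First I would check the formula in (2) is well defined: for $f\in C_0(\Bf)_{t^{-1}}=C_0(\Bf,X_{t^{-1}})$ the assignment $x\mapsto \al_t(f(t^{-1}\cdot x))$ on $X_t$ makes sense because $x\in X_t$ forces $t^{-1}\cdot x\in X_{t^{-1}}\subset X$, so $f$ is defined there and $\al_t$ is defined on that fiber; extending by $0_x$ off $X_t$ produces a section vanishing at infinity because $X_t$ is open and the restriction of $\al^e$ identifies it with a continuous section supported in $X_t$. I would phrase this cleanly by noting $C_0(\Bf,X_t)=\wt{\al^e}_t\bigl(C_0(\Bf^e,X_{t^{-1}})\bigr)\cap C_0(\Bf)$, so $\wt{\al}_t$ is exactly $\wt{\al^e}_t$ restricted to the ideal $C_0(\Bf)_{t^{-1}}$, landing in the ideal $C_0(\Bf)_t$. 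Axiom (1), that $C_0(\Bf)_e=C_0(\Bf)$ and each $C_0(\Bf)_t$ is an ideal, is immediate from $X_e=X$ and from the description $C_0(\Bf,X_t)$ as the functions vanishing off the open set $X_t$. Axiom (2) (each $\wt{\al}_t$ is a $*$-isomorphism $C_0(\Bf)_{t^{-1}}\to C_0(\Bf)_t$ with $\wt{\al}_e=\id$) and Axiom (3) (the composition/cocycle identity $\wt{\al}_{st}$ extends $\wt{\al}_s\circ\wt{\al}_t$ on the appropriate ideal) then transfer directly from the corresponding facts for the global action $\wt{\al^e}$ and the base partial action $\cdot$, using that $\al^e_{st}=\al^e_s\circ\al^e_t$ fiberwise and that $t^{-1}\cdot x$ tracks the base dynamics.

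The remaining, and I expect principal, point is the continuity condition in Definition of a $C^*$-PDS, namely that for each fixed $f$ the map $t\mapsto \wt{\al}_t(f\cdot \chi)$ is continuous from $H$ into $C_0(\Bf)$ for appropriate elements, i.e. strong continuity of the partial action. I would reduce this to strong continuity of the global action $\wt{\al^e}$ on $C_0(\Bf^e)$, which is the standard statement that a continuous action of a LCH group on a $C_0$-section algebra of an upper semicontinuous bundle is strongly continuous; this rests on the continuity of $\al^e:\Gamma_{\al^e}=H\times X^e\to B^e$ and a compactness/uniform-continuity argument using $C_c(\Bf^e)$ as a dense set and the local triviality from Theorem \ref{construction of enveloping bundle}. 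The genuinely partial wrinkle is that as $t$ varies the ideal $C_0(\Bf)_{t^{-1}}$ on which $\wt{\al}_t$ is defined also varies, so the continuity has to be formulated in the enveloping algebra and then intersected back with the ideal $C_0(\Bf)$; I would handle this exactly as the base-space continuity was handled in Lemma \ref{accion producto} and Lemma \ref{accion en espacio de orbitas}, by passing to the open domain $\Gamma_{\al}$ and using that $X$ is open in $X^e$. Once strong continuity of $\wt{\al^e}$ is in hand, the restriction to the ideal inherits the continuity required by the definition of a $C^*$-PDS, completing the proof.
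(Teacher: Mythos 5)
Your reduction to the enveloping action has a genuine gap: Theorem \ref{cxPDS dado por p.a. en fibrado} assumes only that $\al$ is a \emph{continuous} partial action, whereas your entire strategy requires $C_0(\Bf^e)$ to be a $C^*$-algebra carrying a strongly continuous global action $\wt{\al^e}$. For that you need $X^e$ to be LCH, and by Lemma \ref{envolvente es SD sii grafico cerrado} (via Proposition 1.2 of \cite{Fernando}) $X^e$ is Hausdorff if and only if $\al$ has closed graph --- a hypothesis absent from the statement. Your phrase ``the Hausdorff orbit space $X^e$'' is false in general: $X^e$ is the enveloping space, it can fail to be Hausdorff, and in that case the continuous sections vanishing at infinity do not form a $C^*$-algebra with the supremum norm (the paper is careful to assert this only for LCH base spaces). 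This is precisely why the paper proves the theorem directly: it shows by hand that $\wt{\al}_t(f)$ is a continuous section vanishing at infinity, the only delicate point being continuity at the boundary of $X_t$, handled by taking a net $\{x_i\}\subset X_t$ converging to $x\notin X_t$ and using that $y\mapsto\|f(y)\|$ vanishes at infinity on $X_{t^{-1}}$ while $\{t^{-1}\cdot x_i\}$ eventually leaves every compact of $X_{t^{-1}}$; it then verifies the continuity of the family of ideals via Urysohn and openness of $\Gamma(X,\al)$, and proves continuity of $\wt{\al}:\Gamma_{\wt{\al}}\to C_0(\Bf)$ by an $\vep/3$ approximation with compactly supported sections plus a pointwise-convergence argument on a compact set (Lemma C.18 of \cite{Williams}). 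The enveloping-algebra comparison you propose is reserved in the paper for Theorem \ref{envolvente de ap C_0(fibrados)}, where closed graph is explicitly assumed.

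If you add the closed-graph hypothesis, your plan does go through and is essentially the content of Theorem \ref{envolvente de ap C_0(fibrados)}; but as a proof of the theorem as stated it establishes a strictly weaker result. A second, smaller omission: the definition of a $C^*$-PDS (Definition 2.2 of \cite{Fernando}, following \cite{Exel}) requires that $\{C_0(\Bf)_t\}_{t\in H}$ be a \emph{continuous family} of ideals, i.e. that $\{t\in H\ |\ C_0(\Bf)_t\cap U\neq\emptyset\}$ be open for every open $U\subset C_0(\Bf)$. Your proposal never addresses this condition; it would follow formally from the restriction-of-a-global-action picture, but since that picture is unavailable without closed graph, the condition must be checked directly, as the paper does with a compactly supported $g\in C_0(\Bf)_t\cap U$ and a neighbourhood $V$ of $t$ with $\supp(g)\subset X_r$ for all $r\in V$.
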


\begin{proof}
First of all we have to show that, given $t\in H$ and $f\in C_0(\Bf)_{t^ {-1}}$,
the function $\wt{\al}_t(f)$ belongs to $C_0(\Bf)_t$. It is clear that
$\wt{\al}_t(f)$ is a section that vanishes outside $X_t$. Besides, the function
$X_t\to\R,$ $x\mapsto \|\what{\al}_t(f)(x)\|,$ being equal to $X_t\to\R,$
$x\mapsto \|f(t^{-1}\cdot x)\|,$ vanishes at infinity.

Clearly $\what{\al}_t(f)$ is continuous on $X_t$ and in the interior of the
complement of $X_t$. To prove the continuity of $\what{\al}_t(f)$ it suffices to
show that given a net $\{x_i\}_{i\in I}\subset X_t$ converging to a point
$x\notin X_t$, we have $\|\wt{\al}_t(f)(x_i)\|\to 0$. Notice that the function
$X_{t^{-1}}\to\R,$ $y\mapsto \|f(y)\|,$ vanishes at infinity and the net
$\{t^{-1}\cdot x_i\}_{i\in I}$ is eventually outside every compact of
$X_{t^{-1}}$, we conclude $\|\wt{\al}_t(f)(x_i)\|=\|f(t^{-1}\cdot x_i)\|\to 0.$

The next step is to show $\what{\al}$ is a partial action (Definition
\ref{definicion ap}). We omit the proof of this fact because it is an easy task.

To prove $\{C_0(\Bf)_t\}_{t\in H}$ is a continuous family \cite{Exel}, let $U$
be an open set of $C_0(\Bf)$ and fix $t\in H$ such that $C_0(\Bf)_t\cap U\neq
\emptyset$. By the Urysohn Lemma we can find $g\in C_0(\Bf)_t\cap U$ with
compact support. As the domain of the partial action on $X$ is an open set,
there is an open set containing $t$, $V$, such that $X_r$ contains the support
of $g$ for every $r\in V$. Then $V$ is an open set containing $t$ and contained
in $\{r\in H\ |\ C_0(\Bf)\cap U\neq\emptyset \}.$

Now we deal with the continuity of $\wt{\al}$. Let $\{(t_i,f_i)\}_{i\in I}$ be a
net contained in $\Gamma_{\wt{\al}}$ converging to $(t,f)\in \Gamma_{\wt{\al}}$.
Given $\vep > 0$ there exists $g\in C_c(\Bf)$, with support contained in
$X_{t^{-1}}$, such that $\|f-g\|<\frac{\vep}{3}$ (by the Urysohn Lemma).

We can find an $i_0\in I$ such that $\supp (g)\subset X_{t_i^{-1}}$ and
$\|f_i-g\|<\frac{\vep}{3}$, for every $i\gqs i_0$. Then, for every $i\gqs i_0$

\begin{align}
\| \wt{\al}_t(f)-\wt{\al}_{t_i}(f_i) \| & \lqs \| \wt{\al}_{t_i}(f_i-g) \|+\|
\wt{\al}_{t_i}(g)-\wt{\al}_t(g)\|+ \| \wt{\al}_t(f-g) \| \notag \\
         &<\frac{2\vep}{3}+ \| \wt{\al}_{t_i}(g)-\wt{\al}_t(g)\|.\notag
\end{align}

To complete the proof it suffices to see that $ \lim_i \|
\wt{\al}_{t_i}(g)-\wt{\al}_t(g)\|=0$. To this purpose let $D$ be a compact
containing $t\cdot \supp(g)$ on it's interior, and contained in $X_t$. We may
find $i_1$ (larger than $i_0$) such that $t_i\cdot \supp (g)\subset D$ and
$D\subset X_{t_i}$, for every $i\gqs i_1$. Given $i\gqs i_1$ we have
\begin{equation*} \| \wt{\al}_{t_i}(g)-\wt{\al}_t(g)\|=\sup \{ \|
\al_{t_i}(g(t_i^{-1}\cdot x))-\al_t(g(t^{-1}\cdot x))\|:\ x\in
D\}.\end{equation*}

As $D$ is compact, it suffices to prove $\wt{\al}_{t_i}(g)$ converges point wise
to $\wt{\al}_t(g)$, which is an easy consequence of Lemma C.18 of
\cite{Williams} and the continuity of $\al$ and $g$.
\end{proof}

The definitions of proper, free and commuting partial actions on bundles are the
following ones.

\begin{definition}
Given an upper semicontinuous $C^*-$bundle over a LCH space and a partial action
of a LCH group on the bundle, we say the partial action is \textit{free},
\textit{proper}, \textit{has closed graph} or \textit{has closed domain} if the
partial action on the base space has the respective property. Similarly, given
two partial actions on an upper semicontinuous $C^*-$bundle we say they commute
if the partial actions on the total and base space commute.
\end{definition}

Relating the concepts of enveloping action, in the contexts of $C^*-$algebras
and bundles, we have the following result.

\begin{theorem}\label{envolvente de ap C_0(fibrados)}
Let $\al$ be a partial action of $H$ on the upper semicontinuous $C^*$-bundle
$\Bf=\{B_x\}_{x\in X}$. If $X$ is LCH, $\al$ has closed graph, $\al^e$ is the
enveloping action of $\al$ and $\Bf^e$ the enveloping bundle, then
$(C_0(\Bf^e),H,\wt{\al^e})$ is the enveloping system of $(C_0(\Bf),H,\wt{\al})$
(Definition 2.3 of \cite{Fernando}). So $C_0(\Bf)\rtimes_{\wt{\al}}H$ is a
hereditary and full sub $C^*-$algebra of $C_0(\Bf^e)\rtimes_{\wt{\al^e}} H$. In
particular those crossed products are strongly Morita equivalent.
\end{theorem}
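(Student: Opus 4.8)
The plan is to construct the enveloping system for $(C_0(\Bf),H,\wt{\al})$ explicitly and identify it with $(C_0(\Bf^e),H,\wt{\al^e})$, then invoke the general machinery of \cite{Fernando} relating enveloping systems to crossed products. By Definition 2.3 of \cite{Fernando}, showing $(C_0(\Bf^e),H,\wt{\al^e})$ is the enveloping system of $(C_0(\Bf),H,\wt{\al})$ amounts to exhibiting an injective, nondegenerate $H$-equivariant $*$-homomorphism $\varphi:C_0(\Bf)\to C_0(\Bf^e)$ whose image is an ideal of $C_0(\Bf^e)$ that generates $C_0(\Bf^e)$ under the action, in the appropriate sense. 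The natural candidate is the map induced by the embedding $(\iota_X,\iota_B):\al\to\al^e$ of Theorem \ref{construction of enveloping bundle}: since by that theorem $\iota_X(X)$ is open in $X^e$ and $(\iota_X,\iota_B)$ identifies $\al$ with the restriction $(\al^e)_{\iota_X(X)}$, we get an identification of $C_0(\Bf)$ with the ideal $C_0(\Bf^e,\iota_X(X))\subset C_0(\Bf^e)$.

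The first main step is to verify that $C_0(\Bf)\cong C_0(\Bf^e,\iota_X(X))$ as $C^*$-algebras, and that this identification is $\wt{\al}$--$\wt{\al^e}$ equivariant. The isomorphism comes from property (2) of Theorem \ref{construction of enveloping bundle}, which gives $(p^e)^{-1}(\iota_X(X))=\iota_B(B)$, so sections of $\Bf$ correspond exactly to sections of $\Bf^e$ supported on $\iota_X(X)$; here I need $X$ to be closed enough in $X^e$ for the $C_0$ condition to match up, but since $\iota_X(X)$ is open, $C_0(\Bf^e,\iota_X(X))$ is by definition an ideal and its elements restrict to genuine $C_0$-sections of $\Bf$. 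Equivariance is a direct computation comparing formula (2) in Theorem \ref{cxPDS dado por p.a. en fibrado} for $\wt{\al}$ and $\wt{\al^e}$, using that $\al^e$ restricts to $\al$ on $\iota_B(B)$ and that $\cdot^e$ restricts to $\cdot$ on $\iota_X(X)$.

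The second step is to check the two defining conditions of an enveloping system: that $C_0(\Bf,X_t)$ corresponds to $C_0(\Bf^e)_t\cap C_0(\Bf^e,\iota_X(X))$ for each $t$, and that $C_0(\Bf^e)$ is generated by the orbit of the image ideal. The ideal condition follows from $\iota_X(X)$ being open and $\wt{\al^e}$-invariance considerations. The generation condition should follow from property (3) of Theorem \ref{construction of enveloping bundle}, namely that $X^e$ is the orbit of $\iota_X(X)$: every point of $X^e$ lies in some $t\cdot^e\iota_X(X)$, so every section of $\Bf^e$ can be written, via a partition of unity argument on $X^e$ subordinate to the open cover $\{t\cdot^e\iota_X(X)\}_{t\in H}$, as a finite sum of translates $\wt{\al^e}_t(g)$ with $g\in C_0(\Bf^e,\iota_X(X))$. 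This is where local compactness of $X^e$ (guaranteed by Lemma \ref{envolvente es SD sii grafico cerrado} and the closed-graph hypothesis, which makes $X^e$ Hausdorff and hence $\Bf^e$ a genuine bundle over an LCH space) is essential, so that $C_c$-sections are dense and Urysohn/partition-of-unity arguments apply.

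The final sentences are then immediate: once $(C_0(\Bf^e),H,\wt{\al^e})$ is known to be the enveloping system, the structural results of \cite{Fernando} (Definition 2.3 and the surrounding theory) give that $C_0(\Bf)\rtimes_{\wt{\al}}H$ sits inside $C_0(\Bf^e)\rtimes_{\wt{\al^e}}H$ as a hereditary full subalgebra, and hereditary full subalgebras are strongly Morita equivalent to the ambient algebra. I expect the main obstacle to be the generation/nondegeneracy step: carefully justifying that the orbit of the ideal $C_0(\Bf^e,\iota_X(X))$ is dense in $C_0(\Bf^e)$ requires the bundle $\Bf^e$ to be well-behaved over the LCH base $X^e$, which is precisely why the closed-graph hypothesis on $\al$ is invoked, and the partition-of-unity argument must be set up so that the local pieces genuinely lie in translates of the ideal.
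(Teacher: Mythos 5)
Your proposal is correct and follows essentially the same route as the paper's proof: both identify $C_0(\Bf)$ with the ideal $C_0(\Bf^e,X)$ via Theorem \ref{construction of enveloping bundle}, verify the compatibility $\wt{\al^e}_t(C_0(\Bf^e,X))\cap C_0(\Bf^e,X)=C_0(\Bf)_t$ together with equivariance of the inclusion, and establish density of the span of the $\wt{\al^e}$-orbit by a partition-of-unity decomposition of compactly supported sections subordinate to the cover $\{t\cdot^e X\}_{t\in H}$, with the closed-graph hypothesis entering exactly as you predict, through Lemma \ref{envolvente es SD sii grafico cerrado} making $X^e$ LCH. The only cosmetic difference is that the paper reduces the enveloping-system verification to the density condition by citing Corollary 1.3 of \cite{Abadie-Marti} rather than appealing only to Definition 2.3 of \cite{Fernando}.
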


\begin{proof}
By Theorem \ref{construction of enveloping bundle} we may suppose $X\subset
X^e$, $\Bf\subset \Bf^e$, $p^e(\Bf)=X$ and that $p$ is the restriction of $p^e$
to $\Bf$. Now, by Lemma \ref{envolvente es SD sii grafico cerrado}, $X^e$ is
LCH. This considerations allows us to identify the bundle $\Bf$ with the
restriction of $\Bf^e$ to $X$, which gives $C_0(\Bf)=C_0(\Bf^e,X)$. We have
identified $C_0(\Bf)$ with an ideal of $C_0(\Bf^e).$

We also have, for every $t\in H$,
\begin{align}
\wt{\al^e}_t(C_0(\Bf^e,X))\cap C_0(\Bf^e,X) & = C_0(\Bf^e,t\cdot X)\cap
C_0(\Bf^e,X)=C_0(\Bf^e,X\cap t\cdot X)\notag \\
           &=C_0(\Bf^e,X_t)=C_0(\Bf)_t;\notag
\end{align}
and clearly the restriction of $\wt{\al^e}_t$ to $C_0(\Bf)_{t^{-1}}$ equals
$\wt{\al}_t$. So, using Corollary 1.3 of \cite{Abadie-Marti}, the only thing
that remains to be showed is that the space generated by the $\wt{\al^e}-$orbit
of $C_0(\Bf)$ is dense in $C_0(\Bf^e)$.

We show every continuous section with compact support of $\Bf^e$ is a finite sum
of points in the orbit of $C_0(\Bf)$. Fix $f\in C_c(\Bf^e)$. The support of $f$
has an open cover by sets of the form $t\cdot^e X$, $t$ varying in $H$. We can
find $t_1,\ldots,t_n\in H$ and $h_1,\ldots,h_n\in C_c(X^e)$ such that: $0\lqs
h_1+\cdots +h_n\lqs 1$, the support of $h_i$ is contained in $t_i\cdot X$
($i=1,\ldots,n$) and $h_1(x)+\cdots +h_n(x)=1$ if $x\in \supp (f)$. Defining
$f_i(x)=h_i(x)f(x)$ ($i=1,\ldots,n$), we have $f=f_1+\cdots +f_n$ and
$g_i:=\wt{\al^e}_{t_i^{-1}}(f_i)\in C_c(\Bf)$ for every $i=1,\ldots,n$. Besides,
$f=\wt{\al^e}_{t_1}(g_i)+\cdots + \wt{\al^e}_{t_n}(g_n)$, that gives the desired
result.
\end{proof}

We can reproduce most of the results of Section \ref{Properties of Partial
Actions} in this context. For example, the next Theorem is a direct consequence
of Lemma \ref{accion producto}.

\begin{theorem}\label{accion producto en fibrado}
Given an upper semicontinuous $C^*-$bundle $\Bf$ and commutative partial
actions, $(\al,\cdot)$ and $(\be,\star)$ of $H$ and $K$ on $\Bf$, respectively,
the pair $(\al,\cdot)\times (\be,\star):=(\al\times \be,\cdot\times \star)$ is a
partial action of $H\times K$ on $\Bf$.
\end{theorem}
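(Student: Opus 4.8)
The plan is to check directly that the pair $(\al\times\be,\cdot\times\star)$ meets the three requirements of Definition \ref{definicion ap en fibrados}. By the definition of commuting partial actions on a bundle, the hypothesis gives that $\al$ and $\be$ commute on the total space $B$ and that $\cdot$ and $\star$ commute on the base $X$. Hence Lemma \ref{accion producto} applies twice: once on $B$, producing a continuous partial action $\al\times\be$ of $G:=H\times K$ with $(\al\times\be)_{(s,t)}=\al_s\circ\be_t$, and once on $X$, producing a continuous partial action $\cdot\times\star$ with $(\cdot\times\star)_{(s,t)}=\cdot_s\circ\star_t$. Continuity of both actions is therefore handed to us, and what is left is to see how $p$ intertwines them, i.e.\ conditions (1)--(3) of Definition \ref{definicion ap en fibrados}.

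The single computation doing most of the work is the following pull-back/push-forward identity. Because $p$ is a morphism for $\be$ and $\be_t$ is a bijection of $p^{-1}(X^K_{t^{-1}})$ onto $p^{-1}(X^K_t)$ covering $\star_t$, one has
\[ \be_t\bigl(p^{-1}(U)\bigr)=p^{-1}\bigl(\star_t(U)\bigr)\qquad\text{for every }U\subset X^K_{t^{-1}}; \]
the inclusion $\subset$ is just the intertwining identity $p\circ\be_t=\star_t\circ p$, and the reverse inclusion uses that $\be_t$ surjects onto $p^{-1}(X^K_t)$ together with the injectivity of $\star_t$. The analogous identity holds for $\al$ and $\cdot$.

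Granting this, condition (1) follows from the domain formulas of Lemma \ref{accion producto}. Writing $B^H_s=p^{-1}(X^H_s)$ and $B^K_{t^{-1}}=p^{-1}(X^K_{t^{-1}})$ (these being condition (1) of Definition \ref{definicion ap en fibrados} for $\al$ and $\be$),
\[ B_{(s,t)}=\be_t(B^K_{t^{-1}}\cap B^H_s)=\be_t\bigl(p^{-1}(X^K_{t^{-1}}\cap X^H_s)\bigr)=p^{-1}\bigl(\star_t(X^K_{t^{-1}}\cap X^H_s)\bigr)=p^{-1}(X_{(s,t)}). \]
Condition (2) then splits in two: the inclusion $p(B_{(s,t)})\subset X_{(s,t)}$ is immediate from (1) and the surjectivity of $p$, while the intertwining $p\circ(\al\times\be)_{(s,t)}=(\cdot\times\star)_{(s,t)}\circ p$ is obtained by composing $p\circ\al_s=\cdot_s\circ p$ with $p\circ\be_t=\star_t\circ p$ on the domain of the product action. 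For condition (3), the restriction of $(\al\times\be)_{(s,t)}=\al_s\circ\be_t$ to a fiber is the composite of the two fiberwise $C^*-$homomorphisms given by condition (3) for $\be$ and then for $\al$, hence is itself a $C^*-$homomorphism.

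I do not anticipate a real obstacle: Lemma \ref{accion producto} disposes of every continuity and partial-action axiom, and the three bundle conditions reduce to composing the corresponding properties of the two factors. The one place where care is genuinely needed is the set equality of condition (1), where I must track which domain each point inhabits as I alternately pull back along $p$ and push forward along $\be_t$; isolating the identity $\be_t(p^{-1}(U))=p^{-1}(\star_t(U))$ is exactly what makes that bookkeeping transparent.
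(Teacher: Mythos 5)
Your proposal is correct and follows essentially the same route as the paper, which gives no written proof beyond declaring the theorem ``a direct consequence of Lemma \ref{accion producto}'': you apply that lemma on both the total and base spaces and then check conditions (1)--(3) of Definition \ref{definicion ap en fibrados}, which is exactly the routine verification the paper leaves to the reader. Your isolated identity $\be_t\bigl(p^{-1}(U)\bigr)=p^{-1}\bigl(\star_t(U)\bigr)$ for $U\subset X^K_{t^{-1}}$ is a clean way to organize the domain bookkeeping in condition (1), and the rest (composing the intertwining relations and the fiberwise $C^*$-morphisms) is sound.
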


Writing $\al=(\al,\cdot)$ and $\be=(\be,\star)$, the product $\al\times \be$ is
the one defined in the previous Theorem.

\subsection{Orbit bundle}\label{fibrado cociente} There is a notion of ``orbit
bundle", analogous to the notion of ``orbit space", but to construct it we have
to consider proper and free partial actions.

Fix an upper semicontinuous $C^*-$bundle over a LCH space, $\Bf=\{B_x\}_{x\in
X}$, and a proper and free partial action, $\al$, of $H$ on $\Bf$. Let $B/H$ and
$X/H$ be the orbit spaces and $\pi_{B}:B\to B/H$ and $\pi_X:X\to X/H$ be the
orbit maps. As $p$ is a morphism of partial actions, there is a unique
continuous (also open and surjective) function $p_{\al}:B/H\to X/H$ such that
$\pi_X\circ p=p_{\al}\circ\pi_B$.

We want to equip $B/H$ with operations making $\Bf/H:=(B/H,X/H,p_\al)$ an upper
semicontinuous $C^*$-bundle. To do this first notice that, given $Hx\in X/H$,
the fiber $(B/H)_{xH}$ is homeomorphic to $B_x$ trough the restriction of
$\pi_B$ to $B_x$ (because the partial action on $X$ is free). Call that map
$h_x:B_x\to (B/H)_{xH}$. Define the structure of $C^*-$algebra of $(B/H)_{xH}$
in such a way that $h_x$ is an isomorphism of $C^*-$algebras. This definition is
independent of the choice of $x$ because, if $Hy=Hx$, then $h_y^{-1}\circ
h_x:B_x\to B_y$ is an isomorphism. As it is the restriction of $\al_t$ to $B_x$,
$t\in H$ being the unique such that $x\in X_{t^{-1}}$ and $t\cdot x=y$.

The norm of $B/H$ is the function $\|\ \|:B/H\to\R,$ $Ha\mapsto \|a\|$. To prove
it is upper semicontinuous let $\vep$ be a positive number. The set $\{ Hb\in
B/H\ |\ \|Hb\|<\vep \}$ is open because it equals the open set $\pi_B(\{b\in B\
|\ \|b\|<\vep \})$. Similarly, we prove $\|\ \|:B/H\to\R$ is continuous if $\|\
\|:B\to\R$ is continuous.

To prove the continuity of the product and the sum name $D$ the set of points
$(a,b)\in B\times B$ such that $Ha=Hb$. If $(a,b)\in D$ there is a unique $t\in
H$, which we name $t(p(a),p(b))$, such that $p(a)\in X_{t^{-1}}$ and $t\cdot
p(a)=p(b)$. Hence, $\al_t(a)$ and $b$ are in the same fiber, we define
$S(a,b):=\al_t(a)+b$ and $P(a,b):=\al_t(a)b$.

To prove the continuity of $S$ and $P$ we only have to prove the continuity of
the function
\begin{equation*}F:\{(x,y)\in X\times X:\ Hx=Hy\}\to \Gamma(X,\al), \
F(x,y)=(t(x,y),x).\end{equation*} Call $D_X$ the domain of $F$.

Consider the function $R:\Gamma(X,\al)\to X\times X$ given by $R(t,x)=(x,t\cdot
x)$, this is a continuous, proper and injective function between LCH spaces.
Such functions are homeomorphisms over its image, but the image of $R$ is $D_X$
and $F=R^{-1}$. So, $F$ is continuous.

Once we have proved the continuity of $S$ and $P$, using the freeness of the
partial action on $X$, we prove they are constant in the classes of the
equivalence relation on $D:$ $(a,b)\sim (c,d)$ if $Ha=Hc$ and $Hb=Hd$. The space
$D/\sim$ is (homeomorphic to) $D':=\{(a,b)\in B/H\times B/H: \
p_{\al}(a)=p_{\al}(b)\}$, and the functions defined by $S$ and $P$ on $D'$ are
exactly the sum and product of $B/H$. We have proved they are continuous, for
the rest of the operations we proceed in a similar way.

The last step, to show $\Bf/H$ is an upper semicontinuous $C^*-$bundle is to
prove it satisfies the following property: for every net $\{b_i\}_{i\in
I}\subset B/H$ such that $\|b_i\|\to 0$ and $p_{\al}(b_i)\to z$, for some $z\in
X/H$, we have $b_i\to 0_z$.

Let $\{b_i\}_{i\in I}$ be a net as before, it suffices to show it has a subnet
converging to $0_z$. There is a net in $B$, $\{a_i\}_{i\in I}$, such that
$b_i=Ha_i$ for every $i\in I$. We have that $Hp(a_i)=p_{\al}(b_i)\to Hx$, where
$x\in X$ is such that $Hx=z$. As the orbit map $X\to X/H$ is open and
surjective, Proposition 13.2 Chapter II of \cite{Fell-Doran} implies there is a
subnet $\{a_{i_j}\}_{j\in J}$ and a net $\{t_j\}_{j\in J}\subset H$ such that
$p(a_{i_j})\in X_{t_j^{-1}}$ and $t_j\cdot p(a_{i_j})\to x$. This implies
$a_{i_j}\in {}_{t_j^{-1}}B$ and $p(\al_{t_{i_j}}(a_{i_j}))\to x$. But also $\|
\al_{t_{i_j}}(a_{i_j}) \|=\|b_{i_j}\|\to 0$, so, $\al_{t_{i_j}}(a_{i_j})\to
0_x$. Finally, as $b_{i_j}= H\al_{t_{i_j}}(a_{i_j})$, $\pi_B(0_x)=H0_x=0_z$ and
$\pi_B$ is continuous, $0_z$ is a limit point of $\{b_{i_j}\}_{j\in J}$.

\begin{definition}
The \textit{orbit bundle} of $\Bf$ by $\al$ is the upper semicontinuous
$C^*-$bundle $\Bf/H$ constructed before.
\end{definition}

\begin{example}
Consider the situation of Example \ref{acion en fibrado trivial} where the
action of $H$ on $A$ is the trivial one ($\gamma_t=\id_A$ for every $t\in H$)
and
the system $(X,H,\cdot)$ is free and proper. Then the quotient bundle $\Bf/H$ is
isomorphic to the trivial bundle $A\times X/H.$ Notice that $C_0(\Bf/H)$ is
isomorphic to $C_0(X/H,A)$, the set of continuous functions from $X/H$ to $A$
vanishing at infinity.
\end{example}

Our next goal is to identify $C_0(\Bf/H)$ with a $C^*-$sub algebra of
$C_b(\Bf)$. Every function $f\in C_b(\Bf),$ which is also a morphism of partial
actions, induces a continuous and bounded section $\ind_b(f):X/H\to B/H$, given
by $Hx\mapsto Hf(x)$.

The induced algebra $\ind_b(\Bf,\al)$ is the subset of $C_b(\Bf)$ formed by all
the sections which are also morphism of partial actions. There is a natural map
\begin{equation*} \ind_b:\ind_b(\Bf,\al)\to C_b(\Bf/H),\ f\mapsto
\ind_b(f).\end{equation*}

Similarly, the algebra $\ind_0(\Bf,\al)$ is the pre image of $C_0(\Bf/H)$ under
$\ind_b$. The function $\ind_0$ is simply the restriction of $\ind_b$ to
$\ind_0(\Bf,\al)$.

In fact, the induced algebras are $C^*-$sub algebras of $C_b(\Bf)$. To prove
this it suffices to show $\ind_b(\Bf,\al)$ is a $C^*-$sub algebra and to notice
$\ind_b$ is a morphism of $C^*-$algebras.

The non trivial fact is that $\ind_b(\Bf,\al)$ is closed in $C_b(\Bf)$. Assume
$\{f_n\}_{n\in \N}$ is a sequence contained in $\ind_b(\Bf,\al)$ converging to
$f$. Choose some $t\in H$ and $x\in X_{t^{-1}}$. Even if $B$ is not Hausdorff,
$B_x$ and $B_{t\cdot x}$ are, so we have the following equalities
\begin{equation*} \al_t(f(x))=\lim_n \al_t(f_n(x))=\lim_n f_n(t\cdot x)=f(t\cdot
x).\end{equation*}

\begin{theorem}\label{identificacion algebras inducidas}
The functions \begin{equation*}\ind_b:\ind_b(\Bf,\al)\to C_b(\Bf/H)\ and \
\ind_0:\ind_0(\Bf,\al)\to C_0(\Bf/H)\end{equation*} are isomorphism of
$C^*-$algebras.
\end{theorem}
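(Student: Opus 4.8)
Since the discussion preceding the statement already shows that $\ind_b(\Bf,\al)$ is a $C^*$-subalgebra of $C_b(\Bf)$ and that $\ind_b$ is a $*$-homomorphism, the plan is to prove only that $\ind_b$ is a bijection; the $\ind_0$ case then follows at once, because $\ind_0(\Bf,\al)=\ind_b^{-1}(C_0(\Bf/H))$ and $\ind_0$ is the restriction of $\ind_b$. Injectivity is immediate: the norm on $\Bf/H$ is $\|Ha\|=\|a\|$, so $\|\ind_b(f)(Hx)\|=\|Hf(x)\|=\|f(x)\|$, and since $\pi_X$ is surjective $\ind_b$ is isometric (equivalently, $\ind_b(f)=0$ forces $f(x)=0_x$ for all $x$ because each $h_x$ is injective). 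Thus everything reduces to surjectivity, and a bijective $*$-homomorphism of $C^*$-algebras is automatically an isomorphism.

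To prove surjectivity I would fix $g\in C_b(\Bf/H)$ and define $f:X\to B$ fibrewise by $f(x):=h_x^{-1}(g(Hx))$, where $h_x:B_x\to (B/H)_{Hx}$ is the fibre isomorphism from the orbit bundle construction. By construction $p(f(x))=x$ and $\pi_B(f(x))=g(Hx)$, so $f$ is a section with $\ind_b(f)=g$, and $\|f(x)\|=\|g(Hx)\|\lqs\|g\|$ shows $f$ is bounded. That $f$ is a morphism of partial actions is routine: for $x\in X_{t^{-1}}$ one has $f(t\cdot x)=h_{t\cdot x}^{-1}(g(Hx))$, and using the identity $h_{t\cdot x}^{-1}\circ h_x=\al_t|_{B_x}$ recorded in the orbit bundle construction, $\al_t(f(x))=\al_t(h_x^{-1}(g(Hx)))=h_{t\cdot x}^{-1}(g(Hx))=f(t\cdot x)$.

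The only real obstacle is the continuity of $f$, and this is where freeness and properness enter. I would verify it with nets via the subnet characterization of convergence: it suffices to show that whenever $x_i\to x_0$ in $X$, every subnet of $\{f(x_i)\}$ has a further subnet converging to $f(x_0)$. Given such a subnet, $\pi_B(f(x_i))=g(Hx_i)\to g(Hx_0)=\pi_B(f(x_0))$ by continuity of $g$ and $\pi_X$. Since the orbit map $\pi_B$ is open and surjective, Proposition 13.2 of \cite{Fell-Doran} (applied exactly as in the orbit bundle construction) yields a subnet $\{f(x_{i_j})\}$ and elements $t_j\in H$ with $f(x_{i_j})\in{}_{t_j^{-1}}B$ and $\al_{t_j}(f(x_{i_j}))\to f(x_0)$. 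Applying $p$ gives $t_j\cdot x_{i_j}\to x_0$, while $x_{i_j}\to x_0$; moreover $f(x_{i_j})\in{}_{t_j^{-1}}B=p^{-1}(X_{t_j^{-1}})$ means $(t_j,x_{i_j})\in\Gamma(X,\al)$. Hence this net satisfies $(x_{i_j},t_j\cdot x_{i_j})\to(x_0,x_0)$.

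By properness and Lemma \ref{equivalencia ap propia}(2) a subnet $\{(t_{j_k},x_{i_{j_k}})\}$ converges in $\Gamma(X,\al)$ to some $(t_*,x_0)$; then $t_*\cdot x_0=x_0$ with $x_0\in X_{t_*^{-1}}$, so freeness forces $t_*=e$ and $t_{j_k}\to e$. Writing $f(x_{i_{j_k}})=\al_{t_{j_k}^{-1}}(\al_{t_{j_k}}(f(x_{i_{j_k}})))$ and using that $\Gamma(B,\al)$ is open with $(e,f(x_0))\in\Gamma(B,\al)$, the continuity of the total-space action gives $f(x_{i_{j_k}})\to\al_e(f(x_0))=f(x_0)$. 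This produces the required convergent further subnet, so $f$ is continuous and $f\in\ind_b(\Bf,\al)$ with $\ind_b(f)=g$. Hence $\ind_b$ is a bijective $*$-homomorphism, so an isomorphism of $C^*$-algebras, and restricting to preimages gives the same for $\ind_0$. I expect this last net argument — coordinating Fell--Doran, properness and freeness to upgrade convergence in the quotient to convergence in $B$ — to be the crux of the proof.
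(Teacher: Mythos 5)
Your proposal is correct and follows the paper's proof essentially step for step: injectivity via the isometry, the fibrewise definition of $f$ from freeness, continuity via openness of the orbit map $B\to B/H$ and Proposition 13.2 of \cite{Fell-Doran}, and the concluding $f(x_{i_j})=\al_{t_j^{-1}}\al_{t_j}(f(x_{i_j}))$ trick. The only (harmless) deviation is local: where the paper gets $t_j\to e$ from the continuity of the translation map $t(\cdot,\cdot)=F=R^{-1}$ established in the orbit-bundle construction of Section \ref{fibrado cociente}, you rederive it by extracting a further subnet via Lemma \ref{equivalencia ap propia}(2) and then invoking freeness and the Hausdorffness of $X$ --- the same hypotheses (properness and freeness), packaged differently.
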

\begin{proof}
The only thing to prove is that $\ind_b$ is surjective (it is injective because
is an isometry). Fix $g\in C_b(\Bf)$, we will construct $f\in \ind_b(\Bf,\al)$
such that $\ind_b(f)=g$.

As the action on the base space is free, for every $x\in X$ there is a unique
$f(x)\in B_x$ such that $Hf(x)=g(Hx)$. Clearly $f$ is a bounded section.

To prove $f$ is continuous, let $\{x_i\}_{i\in I}$ be a net contained in $X$
converging to $x\in X$. It suffices to find a subnet $\{x_{i_j}\}_{j\in J}$ such
that $f(x_{i_j})\to f(x).$ By the continuity of $g$ the net $\{Hf(x_i)\}_{i\in
I}$ has $Hf(x)$ as a limit point. As the orbit map $B\to B/H$ is open, there is
a subnet $\{x_{i_j}\}_{j\in J}$ and a net $\{t_j\}_{j\in J}$ such that $\{
(t_j,f(x_{i_j})) \}_{j\in J}\subset \Gamma(B,\al)$ and $\al_{t_j}(f(x_{i_j}))\to
f(x)$. This implies $\{ (t_j,x_{i_j}) \}_{j\in J}\subset \Gamma(X,\al)$ and
$t_j\cdot x_{i_j}\to x$. Then $t_j=t(x_{i_j},t_j\cdot x_{i_j})\to t(x,x)=e$ (see
the construction of the orbit bundle in Section \ref{fibrado cociente}).
Finally, the net $\{ f(x_{i_j})\}_{j\in J}$, being equal to $\{
\al_{t_j^{-1}}\al_{t_j}(f(x_{i_j})) \}_{j\in J}$, has $f(x)$ as a limit point.

It remains to prove $f$ is a morphism of partial actions. Clearly $f(X_t)\subset
{}_tB$ for every $t\in H$. Now take $t\in H$ and $x\in X_{t^{-1}}$. The points
$f(t\cdot x)$ and $\al_t(f(x))$ are, both, the unique point of $B_{t\cdot x}$ in
the class of $g(Hx)$, so they are equal.
\end{proof}

\begin{theorem}\label{AP en algebra inducida}
Let $\Bf=\{B_x\}_{x\in X}$ be an upper semicontinuous $C^*-$bundle over a LCH
space and $\al$ and $\be$ be partial actions of $H$ and $K$ on $\Bf$,
respectively. If $\al$ is free and proper, then $(\ind_0(\Bf,\al),K,\what{\be})$
is a $C^*-$PDS where
\begin{enumerate}
\item $\ind_0(\Bf,\al)_t:=\{ f\in \ind_0(\Bf,\al): \ x\mapsto \|f(x)\|
\textrm{vanishes outside } X^H_t\}.$
\item For every $f\in \ind_0(\Bf,\al)_{t^{-1}}$
$\what{\be}_t(f)(x)=\be_t(f(t^{-1}\star x))$ if $x\in X^K_t$ and $0_x$
otherwise.
\end{enumerate}
\end{theorem}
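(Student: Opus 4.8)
The plan is to reduce the statement to Theorem~\ref{cxPDS dado por p.a. en fibrado} by passing to the orbit bundle. Since $\al$ is free and proper, the orbit bundle $\Bf/H=(B/H,X/H,p_\al)$ built in Section~\ref{fibrado cociente} is a well-defined upper semicontinuous $C^*$-bundle, and its base space $X/H$ is LCH because a proper PDS has LCH orbit space. Moreover, Theorem~\ref{identificacion algebras inducidas} provides an isomorphism of $C^*$-algebras $\ind_0:\ind_0(\Bf,\al)\to C_0(\Bf/H)$. So it suffices to produce a continuous partial action $\be^\flat$ of $K$ on $\Bf/H$ and then to check that $\ind_0$ carries $\what{\be}$ and the sets $\ind_0(\Bf,\al)_t$ to the action $\wt{\be^\flat}$ and the ideals $C_0(\Bf/H)_t$ furnished by Theorem~\ref{cxPDS dado por p.a. en fibrado}; the whole conclusion then transports back along $\ind_0$. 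I should stress that the formula for $\what{\be}$ only makes sense because $\al$ and $\be$ commute (this is exactly what guarantees that $\what{\be}_t(f)$ is again a morphism of the $\al$-action), so I use commutativity throughout.

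To build $\be^\flat=(\be^\flat,\what{\star})$ I first treat the base. The symmetric form of Lemma~\ref{accion en espacio de orbitas}, interchanging the roles of $H$ and $K$, yields a continuous partial action $\what{\star}$ of $K$ on $X/H$ with $(X/H)^K_t=HX^K_t$ and $\what{\star}_t(Hx)=H(t\star x)$, and Lemma~\ref{orbitas para PDS}, again with the roles interchanged and using that $\al$ is proper, makes $(X/H,K,\what{\star})$ a PDS. On the total space I set $\be^\flat_t(Hb):=H\be_t(b)$ on the corresponding domain. This is well defined precisely because of commutativity: if $Hb=Hb'$ then $b'=\al_s(b)$ for some $s\in H$, whence $\be_t(b')=\be_t(\al_s(b))=\al_s(\be_t(b))$ and therefore $H\be_t(b')=H\be_t(b)$. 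The remaining requirements of Definition~\ref{definicion ap en fibrados} — that $p_\al$ is a morphism, that each $\be^\flat_t$ is fibrewise a $C^*$-isomorphism, and the continuity of $\be^\flat$ on the possibly non-Hausdorff space $B/H$ — are verified with the same open-map and quotient-map arguments used to construct the orbit bundle in Section~\ref{fibrado cociente}.

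With $\be^\flat$ in hand, Theorem~\ref{cxPDS dado por p.a. en fibrado} shows that $(C_0(\Bf/H),K,\wt{\be^\flat})$ is a $C^*$-PDS whose ideals are $C_0(\Bf/H)_t=C_0(\Bf/H,(X/H)^K_t)$ and whose action is given by the formula of that theorem. It remains to match this with $\what{\be}$ under $\ind_0$. For the ideals, commutativity forces each $X^K_t$ to be $\al$-invariant, so $HX^K_t=X^K_t$; hence, since $\|\ind_0(f)(Hx)\|=\|f(x)\|$ and $X^K_t$ is $H$-saturated, the defining condition of $\ind_0(\Bf,\al)_t$, that $x\mapsto\|f(x)\|$ vanish off $X^K_t$, is equivalent to $\ind_0(f)$ vanishing off $(X/H)^K_t$, so $\ind_0\bigl(\ind_0(\Bf,\al)_t\bigr)=C_0(\Bf/H)_t$. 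For the actions, a direct definition chase using $\ind_0(f)(Hx)=Hf(x)$, $\what{\star}_{t^{-1}}(Hx)=H(t^{-1}\star x)$ and $\be^\flat_t(Hb)=H\be_t(b)$ shows that both $\ind_0(\what{\be}_t(f))$ and $\wt{\be^\flat}_t(\ind_0(f))$ equal $Hx\mapsto H\be_t(f(t^{-1}\star x))$ on $(X/H)^K_t$ and vanish elsewhere. Thus $\ind_0$ is an isomorphism of the dynamical data, and transporting the $C^*$-PDS structure of $(C_0(\Bf/H),K,\wt{\be^\flat})$ back proves that $(\ind_0(\Bf,\al),K,\what{\be})$ is a $C^*$-PDS.

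The main obstacle is the middle step: constructing $\be^\flat$ and, above all, establishing its continuity on the total space $B/H$, which need not be Hausdorff. This is the bundle-level counterpart of Lemmas~\ref{accion en espacio de orbitas} and~\ref{orbitas para PDS}, and it is where freeness and properness of $\al$ (through the very existence of the orbit bundle) together with the commutativity of $\al$ and $\be$ (through the well-definedness of the descended map and the $\al$-invariance of $X^K_t$) are all needed; everything else is bookkeeping. One could alternatively verify directly that $\what{\be}$ is a continuous partial action on $\ind_0(\Bf,\al)$, but that would essentially duplicate the proof of Theorem~\ref{cxPDS dado por p.a. en fibrado}, so factoring the argument through the orbit bundle is the cleaner route.
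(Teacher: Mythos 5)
Your proposal follows essentially the same route as the paper's own proof: the paper likewise forms the orbit bundle $\Bf/H$, uses Lemmas \ref{accion en espacio de orbitas} and \ref{orbitas para PDS} (with the roles of $H$ and $K$ interchanged) to define a partial action $\mu$ of $K$ on $\Bf/H$, applies Theorem \ref{cxPDS dado por p.a. en fibrado} to obtain the $C^*$-PDS $(C_0(\Bf/H),K,\wt{\mu})$, and transports it back through the isomorphism $\ind_0$ of Theorem \ref{identificacion algebras inducidas}, observing that $\ind_0(\Bf,\al)_t$ is the preimage of $C_0(\Bf/H)_t$. Your write-up is merely more explicit than the paper's terse proof (spelling out the well-definedness of the total-space action via commutativity, and correctly reading the typo $X^H_t$ in item (1) as $X^K_t$), so it is correct and essentially identical in approach.
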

\begin{proof}
Let $\Bf/H$ be the orbit bundle. As $\al$ commutes with $\be$, using Lemmas
\ref{accion en espacio de orbitas} and \ref{orbitas para PDS}, we define a
partial action, $\mu$, of $K$ on $\Bf/H$.

By Theorem \ref{cxPDS dado por p.a. en fibrado}, $\mu$ defines a $C^*-$PDS
$(C_0(\Bf/H),K,\wt{\mu})$. Lemma \ref{identificacion algebras inducidas} ensures
the map $\ind_0:\ind_0(\Bf,\al)\to C_0(\Bf/H)$ is an isomorphism. Notice
$\ind_0(\Bf,\al)_t$ is the pre image of $C_0(\Bf/H)_t$. The partial action of
the thesis is the unique making $\ind_0:\what{\be}\to \wt{\mu}$ an isomorphism
of partial actions.
\end{proof}

\section{Morita equivalence}
In our last section we prove our main theorem, which is a generalization of
Raeburn's and Green's Symmetric Imprimitivity Theorems
\cite{Raeburn,Rieffel-TeoGreen}. The first task is to translate Raeburn's result
to the language of actions on bundles.

Consider two $C^*-$DS $(A,H,\gamma)$ and $(A,K,\delta)$, and two proper and free
DS $(X,H,\cdot)$ and $(X,K,\star)$. Assume also that the actions on $A$ and $X$
commute. On the trivial bundle $\Bf=A\times X$ define the actions of $H$ and $K$
as in Example \ref{acion en fibrado trivial}, call them $\al$ and $\be$,
respectively.

Let $\ind \gamma$ be the induced $C^*$-algebra defined as in \cite{Raeburn}. We
have an isomorphism $\rho:\ind \gamma\to  \ind_0(\Bf,\al)$, given by
$\rho(f)(x)=(f(x),x)$. This isomorphism takes the action of $K$ on $\ind \gamma$
(as defined on \cite{Raeburn}) into the action $\what{\be}.$ By using Raeburn's
Theorem we conclude that $\ind_0(\Bf,\al)\rtimes_{\what{\be}}K $ is strongly
Morita equivalent to $\ind_0(\Bf,\be)\rtimes_{\what{\al}}H$. Our purpose is to
give a version of this result for partial actions. We will write $A\sim_M B$
whenever $A$ and $B$ are strongly Morita equivalent $C^*-$algebras
\cite{Rieffel-MorEq}.

\subsection{The main Theorem}
From now on we work with two LCH topological groups, $H$ and $K$, an upper
semicontinuous $C^*-$bundle with LCH base space, $\Bf=\{B_x\}_{x\in X}$, and two
continuous, free, proper and commuting partial actions, $\al$ and $\be$, of $H$
and $K$ on $\Bf$, respectively.

We want to give conditions under which we can say that
$\ind_0(\Bf,\al)\rtimes_{\what{\be}}K$ is strongly Morita equivalent to
$\ind_0(\Bf,\be)\rtimes_{\what{\al}}H$. For global actions, with some additional
hypotheses on the group and the base space, this is proved in \cite{Hu-Rae-Wil}, \cite{Kasparov} or \cite{KlMhQgWl}. In fact, the proof of the next Theorem is a minor
modification of Raeburn's proof the Symmetric Imprimitivity Theorem
\cite{Raeburn}.

\begin{theorem}\label{pre main}
If $\al$ and $\be$ are global actions then
\begin{equation*}\ind_0(\Bf,\al)\rtimes_{\what{\be}}K\sim_M
\ind_0(\Bf,\be)\rtimes_{\what{\al}}H.\end{equation*}
\end{theorem}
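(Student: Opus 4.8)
The plan is to build a concrete imprimitivity bimodule, following Raeburn's construction for the Symmetric Imprimitivity Theorem \cite{Raeburn} but replacing the constant fibre $A$ of his trivial bundle $A\times X$ by the varying fibres $B_x$ of $\Bf$, and replacing the coefficient actions by the bundle actions $\al$ and $\be$. Since $\al$ and $\be$ are global, the base actions $\cdot$ and $\star$ are global, free and proper, so $X$ is a free and proper $H$-space and $K$-space with commuting actions; this is exactly Raeburn's geometric hypothesis. Write $D:=\ind_0(\Bf,\be)\rtimes_{\what{\al}}H$ and $E:=\ind_0(\Bf,\al)\rtimes_{\what{\be}}K$. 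I take the completion of $\mathsf{X}_0:=C_c(\Bf)$ as the candidate $D$--$E$-imprimitivity bimodule.

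First I would write down the two module actions and the two inner products by the Raeburn integral formulas, inserting $\al_t$ and $\be_s$ to bring all operands into a common fibre before multiplying. For instance, the $E$-valued inner product of $\xi,\eta\in C_c(\Bf)$ is the element $\langle\xi,\eta\rangle_E\in C_c(K,\ind_0(\Bf,\al))$ given (up to modular factors) by
\begin{equation*}
\langle\xi,\eta\rangle_E(s)(x)=\int_H \al_t\Big(\xi(t^{-1}\cdot x)^*\,\be_s\big(\eta(s^{-1}\star(t^{-1}\cdot x))\big)\Big)\,dt,
\end{equation*}
and symmetrically the $D$-valued inner product lands in $C_c(H,\ind_0(\Bf,\be))$, with the roles of $H,K$ and $\al,\be$ interchanged; the right $E$-action and left $D$-action are the corresponding convolution-type formulas. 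All the fibrewise operations used here are the $C^*$-operations of the $B_x$ together with the isomorphisms $\al_t,\be_s$, so each integrand is a compactly supported continuous $B$-valued map and the integrals converge.

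Next I would check that these formulas are well defined and land in the correct algebras. The integral above is $\al$-equivariant in $x$ (substitute $t\mapsto rt$ and use that $\al$ acts by fibre isomorphisms), so its value is a genuine section of $\Bf/H$; properness of $\al$ (Lemma \ref{equivalencia ap propia}) together with the compact supports of $\xi,\eta$ forces this section to vanish at infinity on $X/H$ and forces compact support in the $K$-variable, whence $\langle\xi,\eta\rangle_E\in C_c(K,\ind_0(\Bf,\al))\subset E$. Continuity of the resulting section follows from continuity of the bundle operations and of $\al,\be$, exactly as in the orbit-bundle construction of Subsection \ref{fibrado cociente} and Theorem \ref{identificacion algebras inducidas}. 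The commutativity of $\al$ and $\be$ is what makes the two inner products compatible with the opposite module actions.

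Finally I would verify the Rieffel imprimitivity-bimodule axioms: positivity and density of range (fullness) for both inner products, adjointability of the module actions, and the compatibility identity ${}_D\langle\xi,\eta\rangle\cdot\zeta=\xi\cdot\langle\eta,\zeta\rangle_E$. After the change-of-variable and Fubini manipulations each of these reduces to an identity among the fibre $C^*$-algebras and the isomorphisms $\al_t,\be_s$, and is therefore formally identical to the corresponding step in Raeburn's proof; completing $\mathsf{X}_0$ then yields the asserted Morita equivalence. I expect the only real obstacle to be bookkeeping: confirming that every section manufactured by integration is continuous and lies in the induced crossed products. This is where globality is used decisively, since the domains $\Gamma(X,\al)$ and $\Gamma(X,\be)$ are all of $H\times X$ and $K\times X$, no fibre ever drops out of a domain, so there are no partial-domain subtleties and the passage from the constant fibre $A$ to the varying fibres $B_x$ is harmless. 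This is precisely why the global case is isolated before the general partial theorem.
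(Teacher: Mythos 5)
Your proposal is correct and takes essentially the same approach as the paper: both realize the equivalence by completing $C_c(\Bf)$ as a bimodule over the dense convolution algebras $C_c(H,\ind_0(\Bf,\be))$ and $C_c(K,\ind_0(\Bf,\al))$, with Raeburn's integral formulas adapted by inserting the fibre isomorphisms (i.e.\ using $\wt{\al}$, $\wt{\be}$), verifying equivariance, continuity and compact supports directly, and deferring the algebraic bimodule axioms and the approximate-unit argument to \cite{Raeburn} and \cite{Williams}. One small attribution slip, harmless since both hypotheses hold: compact support of $\langle\xi,\eta\rangle_E$ in the $K$-variable comes from properness of the $K$-action $\star$ (as in the paper's compactness argument for the integrand), while properness of the $H$-action is what controls the other inner product's support in $H$.
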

\begin{proof}
Define $E:=C_c(H,\ind_0(\Bf,\be))$ and $F:=C_c(K,\ind_0(\Bf,\al))$, viewed as
dense $*-$sub-algebras of the respective crossed products. Define also
$Z:=C_c(\Bf),$ which will be an $E-F-$bimodule with inner products; whose
completion implements the equivalence between
$\ind_0(\Bf,\al)\rtimes_{\what{\be}}K$ and
$\ind_0(\Bf,\be)\rtimes_{\what{\al}}H.$

For $f,g\in Z$, $b\in E$ and $c\in F$ define
\begin{gather}
b\cdot f(x):= \int_H b(s)(x)\wt{\al}_s(f)(x)\Delta_H(s)^{1/2}
ds,\label{operaciones modulo 1}\\
f\cdot c(x):= \int_K \wt{\be}_t (f c(t^{-1}))(x) \Delta_K(t)^{-1/2}
dt,\label{operaciones modulo 2}\\
{}_{E}\la f,g\ra(s)(x):=\Delta_H(s)^{-1/2} \int_K \wt{\be}_t\left(
f\wt{\al}_s(g^*)\right)(x)dt,\label{operaciones modulo 3}\\
\la f,g\ra_{F}(t)(x):=\Delta_K(t)^{-1/2} \int_H \wt{\al}_s\left(
f^*\wt{\be}_t(g)\right)(x)ds.\label{operaciones modulo 4}
\end{gather}

The integration is with respect to left invariant Haar measures; $\Delta_H$ and
$\Delta_K$ are the modular functions of the groups. Here $\wt{\al}$ and
$\wt{\be}$ are the partial actions defined on Theorem \ref{cxPDS dado por p.a.
en fibrado}. 

We now justify the fact that $b\cdot f\in Z$. The function $H\to C_0(\Bf)$,
given by $s\mapsto b(s)\wt{\al}_s(f)\Delta_H(s)^{1/2}$, is continuous (Theorem
\ref{cxPDS dado por p.a. en fibrado}). Besides, it's support is contained in the
support of $b$ and so we can integrate it. This integral is exactly $b\cdot f$.
Finally, notice $\supp(b\cdot f)\subset \{s\cdot x: \ (s,x)\in \supp(b)\times
\supp(f)\},$ the last being a compact set.

To prove (\ref{operaciones modulo 3}) defines an element of $E$ we proceed as
follows. Fixed $s\in H$ and $x\in X$ the function $K\to B_x$, given by $t\mapsto
\wt{\be}_t\left(f\wt{\al}_s(g^*)\right)(x)$, is continuous with support
contained in the compact $\{ t\in K: \ t^{-1}\star x\in \supp(f)\}.$ So, the
function is integrable. The value of that integral is ${}_{E}\la f,g\ra(s)(x)$.

We now prove ${}_{E}\la f,g\ra$ is continuous, what we do locally. Fix some
$s_0\in H$ and $x_0\in X$. Take compact neighbourhoods, $V$ of $s_0$ and $W$ of
$x_0$. The bundle $\Bf_W$ will be the restriction of $\Bf$ to $W$. Define the
function $F:V\times H\to C(\Bf_W)$ by $F(s,t)(x)=
\wt{\be}_t\left(f\wt{\al}_s(g^*)\right)(x)$. As the action of $K$ on $X$ is
proper, $F$ has compact support. By integrating, with respect to the second
coordinate, we get a continuous function $R\in C(V,C(\Bf_W))$, defined by
$R(s)=\int_K F(s,t)d\mu_K(t)$ (\cite{Fell-Doran} II.15.19).

Fixing $(s,x)\in V\times W$, we have $R(s)(x)={}_{E}\la f,g\ra(s)(x)$. From this
follows the continuity of ${}_{E}\la f,g\ra$.

An easy calculation shows ${}_{E}\la f,g\ra(s)(t\cdot x)=\be_t({}_{E}\la
f,g\ra(s)(x))$, for every $t\in K$ and $x\in X$. Besides, if ${}_{E}\la
f,g\ra(s)(x)\neq 0_x$, then $x$ belongs to the $K-$orbit of $\supp(f)$, and $s$
to the compact set $\{ s\in H:\ s\cdot\supp(g)\cap\supp(f)\neq\emptyset \}$. We
have proved that ${}_{E}\la f,g\ra\in E.$

The computations needed to prove equations (\ref{operaciones modulo
1})-(\ref{operaciones modulo 4}) define an equivalence bi-module are the same as
in \cite{Raeburn} or \cite{Williams}. For the construction of the approximate
unit, analogous to that of Lemma 1.2 of \cite{Raeburn}, follow the proof of
Proposition 4.5 of \cite{Williams}, recalling $\ind_H^P(A,\be)$ plays the role
of our $\ind_0(\Bf,\be).$
\end{proof}

Our next step is to let $\al$ and $\be$ to be partial, but to have the same
result we need additional hypotheses, which are trivially satisfied in the
previous case.

Let $\al\times \be$ be the partial action given by Theorem \ref{accion producto
en fibrado}. Now, by Theorem \ref{construction of enveloping bundle}, we have an
enveloping action $(\al\times \be)^e$ and an enveloping bundle $\Bf^e$. We can
assume $\Bf$ is the restriction of $\Bf^e$ to $X\subset X^e$.

For the action given by $(\al\times \be)^e$ on $X^e$ we will use the notation
$(s,t)x$, for $(s,t)\in H\times K$ and $x\in X^e.$

Define $\sigma$ and $\tau$ as the restriction of $(\al\times \be)^e$ to $H$ and
$K$, respectively (identify $H$ with $H\times \{e\}\subset H\times K$). It is
immediate that $(\al\times \be)^e=\sigma\times \tau$, $\sigma$ and $\tau$
commute, and that $\al$ ($\be$) is the restriction of $\sigma$ ($\tau$) to
$\Bf$.

The next is the main Theorem of this article.

\begin{theorem}\label{main}
If $\al\times\be$ has closed graph and $\sigma$ and $\tau$ are proper then
\begin{equation*}\ind_0(\Bf,\al)\rtimes_{\what{\be}}K\sim_M
\ind_0(\Bf,\be)\rtimes_{\what{\al}}H.\end{equation*}
\end{theorem}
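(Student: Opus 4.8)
The plan is to reduce everything to the global case already settled in Theorem \ref{pre main}, using the enveloping machinery of Theorem \ref{envolvente de ap C_0(fibrados)} to pass between the partial crossed products and their global counterparts over $\Bf^e$. Since $\al\times\be$ has closed graph, Lemma \ref{envolvente es SD sii grafico cerrado} gives that $X^e$ is Hausdorff (hence LCH) and that $(X^e,H\times K,(\al\times\be)^e)$ is a genuine global dynamical system; in particular $\sigma$ and $\tau$ are \emph{global} commuting actions of $H$ and $K$ on the upper semicontinuous bundle $\Bf^e$. They are proper by hypothesis, and I would first check they are free: if $\sigma_h(y)=y$ for $y=(s,t)x$ with $x\in X$ and $(s,t)\in H\times K$, then conjugating gives $(s^{-1}hs,e)x=x$ in $X^e$; as $\al\times\be$ is the restriction of $(\al\times\be)^e$ to $X$, this means $\al_{s^{-1}hs}(x)=x$ with $x$ in the domain, so $s^{-1}hs=e$ by freeness of $\al$, i.e.\ $h=e$ (symmetrically for $\tau$). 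Thus $\sigma,\tau$ are free, proper, global and commuting, and Theorem \ref{pre main} yields
\[\ind_0(\Bf^e,\sigma)\rtimes_{\what{\tau}}K\sim_M\ind_0(\Bf^e,\tau)\rtimes_{\what{\sigma}}H.\]

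The core of the argument is to show that $(\ind_0(\Bf^e,\sigma),K,\what{\tau})$ is the enveloping system of $(\ind_0(\Bf,\al),K,\what{\be})$, and symmetrically with $H$ and $K$ interchanged. Through Theorems \ref{identificacion algebras inducidas} and \ref{AP en algebra inducida} I would replace the induced algebras by section algebras of orbit bundles: $\ind_0(\Bf,\al)\cong C_0(\Bf/H)$ carrying $\what{\be}\cong\wt{\mu}$, where $\mu$ is the $K$-partial action on $\Bf/H$ coming from $\be$ (Lemmas \ref{accion en espacio de orbitas} and \ref{orbitas para PDS}; here freeness and properness of $\al$ make $\Bf/H$ a genuine bundle), and $\ind_0(\Bf^e,\sigma)\cong C_0(\Bf^e/H)$ carrying $\what{\tau}\cong\wt{\nu}$ for the corresponding global $K$-action $\nu$ on $\Bf^e/H$. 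The claim then reduces to showing that $\Bf^e/H$ is the enveloping bundle of $\Bf/H$, that $\nu$ is the enveloping action of $\mu$, and that $\mu$ has closed graph.

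To establish this I would invoke the universal characterization of Theorem \ref{construction of enveloping bundle}. Since $X$ is open in $X^e$ and the orbit map is open, $X/H$ is open in $X^e/H$; the inclusion $\Bf\subset\Bf^e$ descends to an open embedding $\Bf/H\hookrightarrow\Bf^e/H$ identifying $\Bf/H$ with the restriction of $\Bf^e/H$ to $X/H$ (two points of $B$ lie in the same $\sigma$-orbit iff they lie in the same $\al$-orbit, because $\al$ is the restriction of $\sigma$). A short orbit computation, using $(s,t)x=\sigma_s(\tau_t x)$ and $H\sigma_s(z)=Hz$, shows $X^e/H$ is the $\nu$-orbit of $X/H$, and likewise on the bundle, and that $\mu$ is exactly the restriction of $\nu$ to this open set; by uniqueness in Theorem \ref{construction of enveloping bundle}, $(\Bf^e/H,\nu)$ is the enveloping of $(\Bf/H,\mu)$. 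It remains to see that $\mu$ has closed graph, i.e.\ that its base action $\what{\be}$ does: the enveloping space of the PDS $(X/H,K,\what{\be})$ is $X^e/H$, which is LCH (hence Hausdorff) because $\sigma$ is a free, proper, global action of $H$ on the LCH space $X^e$, so Lemma \ref{envolvente es SD sii grafico cerrado} forces closed graph. With $\mu$ of closed graph and base LCH, Theorem \ref{envolvente de ap C_0(fibrados)} gives $C_0(\Bf/H)\rtimes_{\wt{\mu}}K\sim_M C_0(\Bf^e/H)\rtimes_{\wt{\nu}}K$, that is,
\[\ind_0(\Bf,\al)\rtimes_{\what{\be}}K\sim_M\ind_0(\Bf^e,\sigma)\rtimes_{\what{\tau}}K.\]

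Running the same argument with the roles of $H$ and $K$ (and of $\al,\sigma$ and $\be,\tau$) interchanged gives $\ind_0(\Bf,\be)\rtimes_{\what{\al}}H\sim_M\ind_0(\Bf^e,\tau)\rtimes_{\what{\sigma}}H$, where now properness of $\tau$ is what guarantees $X^e/K$ is Hausdorff so that the relevant partial action has closed graph. Chaining these two equivalences with the global equivalence from Theorem \ref{pre main} and using transitivity of $\sim_M$ yields the theorem. I expect the main obstacle to be the middle step: verifying cleanly that the orbit-bundle construction commutes with passage to enveloping bundles, and in particular that $\mu$ inherits a closed graph. This is precisely where both hypotheses are consumed, the closed graph of $\al\times\be$ making $X^e$ Hausdorff and the properness of $\sigma$ (resp.\ $\tau$) making the quotient $X^e/H$ (resp.\ $X^e/K$) Hausdorff.
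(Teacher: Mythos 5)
Your proposal is correct and follows essentially the same route as the paper's proof: freeness of $\sigma$ and $\tau$ by conjugating back into $X$, Theorem \ref{pre main} applied at the enveloping level, the identifications $\ind_0(\Bf,\al)\rtimes_{\what{\be}}K\cong C_0(\Bf/H)\rtimes_{\wt{\mu}}K$ and $\ind_0(\Bf^e,\sigma)\rtimes_{\what{\tau}}K\cong C_0(\Bf^e/H)\rtimes_{\wt{\nu}}K$, the verification that $\nu$ is the enveloping action of $\mu$, and then Theorem \ref{envolvente de ap C_0(fibrados)}. The only differences are cosmetic: you make explicit the closed-graph check for $\mu$ (via Hausdorffness of $X^e/H$ from properness of the global action $\sigma$), which the paper leaves implicit, while the paper spells out the one non-immediate inclusion $HX\cap tHX\subset HX^K_t$ that your ``short orbit computation'' defers.
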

\begin{proof}
To show that $\sigma$ (and also $\tau$) is free. Assume $(s,e)x=x$ for some
$s\in H$ and $x\in X^e$. As $X^e$ is the $H\times K-$orbit of $X$, there exists
$(h,k)\in H\times K$ such that $(h,k)x\in X$. Notice that
$(hsh^{-1},e)(h,k)x=(h,k)x\in X\cap (hsh^{-1},e)^{-1}X$, so, $hsh^{-1}\cdot
(h,k)x=(h,k)x$ and $hsh^{-1}=e$. We conclude $s=e$.

As $\al\times \be$ has closed graph, $\Bf^e$ is an upper semicontinuous
$C^*-$bundle over a LCH space. The hypotheses, together with Theorems \ref{AP en
algebra inducida} and \ref{pre main}, imply
$\ind_0(\Bf^e,\sigma)\rtimes_{\what{\tau}}K$ is strongly Morita equivalent to
$\ind_0(\Bf^e,\tau)\rtimes_{\what{\sigma}} H.$ The proof of our Theorem will be
completed if we can show that $\ind_0(\Bf,\al)\rtimes_{\what{\be}}K$ is strongly
Morita equivalent to $\ind_0(\Bf^e,\sigma)\rtimes_{\what{\tau}}K,$ because, by
symmetry, the same will hold changing $\al$ for $\be,$ $\sigma$ for
$\tau$ and $H$ for $K.$

Tracking back the construction of $\what{\be}$ and $\what{\tau}$, to Theorem
\ref{AP en algebra inducida}, we notice that
$\ind_0(\Bf,\al)\rtimes_{\what{\be}}K$ is isomorphic to
$C_0(\Bf/H)\rtimes_{\wt{\mu}}K$ and $\ind_0(\Bf^e,\sigma)\rtimes_{\what{\tau}}K$
isomorphic to $C_0(\Bf^e/H)\rtimes_{\wt{\nu}}K.$ Here $\mu$ and $\nu$ are the
partial actions of $K$ on $\Bf/H$ and $\Bf^e/H$ given by Lemma \ref{accion en
espacio de orbitas}, respectively. Meanwhile, $\wt{\mu}$ and $\wt{\nu}$ are the
one given by Theorem \ref{cxPDS dado por p.a. en fibrado}. Putting all together,
by Theorem \ref{envolvente de ap C_0(fibrados)}, it suffices to prove $\nu$ is
the enveloping action of $\mu$.

Consider the map $B\to B^e/H$, given by $b\mapsto Hb$. This is an open and
continuous map, it is also constant in the $\al-$orbits. So it defines a unique
map $F:B/H\to B^e/H$, given by $Hb\mapsto Hb$ (this is not the identity map). It
turns out this function is continuous, open, injective and maps fibers into
fibers. In an analogous way we define $f:X/H\to X^e/H$, which has the same
topological properties.

Recalling the construction of $\mu$ and $\nu$, it is easy to show $(F,f):\mu\to
\nu$ is a morphism. To show that $\mu^e=\nu$ it suffices to prove only two
things. Namely, that $f((X/H)_t)=f(X/H)\cap tf(X/H)$ for every $t\in K$ (we
adopted the notation $tz$ for the action of $t\in K$ on $z\in X^e/H$) and that
the $K-$orbit of $f(X/H)$ is $X^e/H$.

For the first one notice that
\begin{equation*} f((X/H)_t)=HX^K_t=HX\cap H(e,t)X=HX\cap tHX=f(X/H)\cap
tf(X/H). \end{equation*}

The second equality of the previous formula is not immediate, but the inclusion
$\subset$ is. For the other one assume $y\in HX\cap H(e,t)X$. Then there exists
$x,z\in X$ such that $y=Hx=H(e,t)z$. There is some $s\in H$ such that
$x=(s,e)(e,t)z=(s,t)z$. So $x\in X\cap (s,t)X=s\cdot (X^H_s\cap X^K_t)$,
$(s^{-1},e)x\in X^K_t$ and $y=K(s^{-1},e)x\in HX^K_t$.

To show $X^e/H$ is the $K-$orbit of $f(X/H),$ notice that
\begin{equation*} \bigcup_{t\in K}tf(X/H)= \bigcup_{t\in K}tHX=H \bigcup_{s \in
H}\bigcup_{t\in K}(s,t)X =HX^e=X^e/H.\end{equation*}

We have proved $\nu$ is the enveloping action of $\mu$, by Theorem
\ref{envolvente de ap C_0(fibrados)} $C_0(\Bf/H)\rtimes_{\wt{\mu}}K$ is strongly
Morita equivalent to $C_0(\Bf^e/H)\rtimes_{\wt{\nu}}K$. This completes the proof
of our main theorem.
\end{proof}

The next Theorem is a consequence of the previous one, it has the advantage of
not making any mention to $\sigma$ nor $\tau$.

\begin{theorem}\label{con dominio cerrado}
If $\al$ and $\be$ have closed domain then \begin{equation*}
\ind_0(\Bf,\al)\rtimes_{\what{\be}}K\sim_M
\ind_0(\Bf,\be)\rtimes_{\what{\al}}H.\end{equation*}
\end{theorem}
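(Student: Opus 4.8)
The plan is to reduce Theorem \ref{con dominio cerrado} to the previously established Theorem \ref{main}. Since Theorem \ref{main} applies whenever $\al\times\be$ has closed graph and the restrictions $\sigma$ and $\tau$ are proper, the entire task is to verify that the hypothesis ``$\al$ and $\be$ have closed domain'' forces all three of these conditions. Recall that, by the conventions adopted just before Theorem \ref{main}, $\sigma$ and $\tau$ are the restrictions of the enveloping action $(\al\times\be)^e$ to the subgroups $H\times\{e\}$ and $\{e\}\times K$, and that $\al$ (resp. $\be$) is the restriction of $\sigma$ (resp. $\tau$) to $\Bf$.

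First I would establish that $\al\times\be$ has closed domain. Since $\al$ and $\be$ have closed domain (meaning, by definition, the partial actions on the base space $X$ have closed domain), Lemma \ref{dominio cerrado accion producto} applies directly to the commuting base-space actions $\cdot$ and $\star$, yielding that the product partial action $\cdot\times\star$ has closed domain; this is exactly the statement that $\al\times\be$ has closed domain. From closed domain I next obtain closed graph: since a continuous partial action with closed domain on a Hausdorff space automatically has closed graph (this is shown inside the proof of Lemma \ref{dominio cerrado equivalencias}, where (1)$\Rightarrow$(2) begins by deducing closed graph from closed domain), I conclude $\al\times\be$ has closed graph, which is the first hypothesis of Theorem \ref{main}.

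The remaining task, and the step I expect to be the main obstacle, is to show $\sigma$ and $\tau$ are proper. Here I would use Lemma \ref{dominio cerrado equivalencias} applied to the base action of $\cdot\times\star$: since that action has closed domain and the base space $X$ is Hausdorff, the enveloping space $X^e$ is Hausdorff and $X$ is closed in $X^e$. The point of the properness argument is that $\sigma$ is the restriction of the \emph{global} action $(\al\times\be)^e$ (restricted to the subgroup $H$) to the clopen-type subset $X\subset X^e$, and its properness should be read off from the properness of $\al$ together with the structure of the enveloping action. Concretely, I would invoke Lemma \ref{equivalencia ap propia}: $\al$ proper with closed graph is equivalent to the enveloping DS $(X^e,H,\al^e)$ being proper, and I would identify the $H$-enveloping dynamics inside $(\al\times\be)^e$ with $\sigma$, so that $\sigma$ proper follows from $\al$ proper. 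The delicate part is checking that the $H$-enveloping space of $\al$ really coincides (as a proper dynamical system) with $X^e$ equipped with the $\sigma$-action, rather than with a smaller orbit space; this requires that the $H$-orbit and the full $H\times K$-orbit of $X$ interact correctly, and is where the closed-domain hypothesis on both $\al$ and $\be$ (not just on the product) is genuinely used.

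Once $\sigma$ and $\tau$ are shown to be proper and $\al\times\be$ is shown to have closed graph, all hypotheses of Theorem \ref{main} are met, and the conclusion $\ind_0(\Bf,\al)\rtimes_{\what{\be}}K\sim_M\ind_0(\Bf,\be)\rtimes_{\what{\al}}H$ follows immediately. I would therefore phrase the proof as three short verifications (closed domain of the product via Lemma \ref{dominio cerrado accion producto}, closed graph via Lemma \ref{dominio cerrado equivalencias}, and properness of $\sigma,\tau$ via Lemmas \ref{dominio cerrado equivalencias} and \ref{equivalencia ap propia}), followed by a single citation of Theorem \ref{main}.
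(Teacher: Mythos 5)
Your reduction to Theorem \ref{main} is organized correctly, and your first two verifications agree with the paper's proof: closed domain of $\al\times\be$ via Lemma \ref{dominio cerrado accion producto}, and closed graph via the first step of the proof of Lemma \ref{dominio cerrado equivalencias}. The gap is in the properness step, and it is genuine. Lemma \ref{equivalencia ap propia} applied to $\al$ gives properness of the enveloping dynamical system of $\al$, and by uniqueness of enveloping actions that system is identified with the $H$-orbit of $X$ inside $X^e$, i.e.\ the open invariant set $HX=\bigcup_{s\in H}(s,e)X$ with the restricted $\sigma$-action. But Theorem \ref{main} requires $\sigma$ to be proper as a global action on all of $X^e$, which is the $H\times K$-orbit of $X$ and in general strictly larger than $HX$. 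So your proposed identification of ``the $H$-enveloping dynamics inside $(\al\times\be)^e$'' with $\sigma$ is false as stated, and properness on $HX$, or even on each translate $(e,t)HX$, does not imply properness on their union: in the net criterion of Lemma \ref{equivalencia ap propia}, a net $\{(s_i,x_i)\}$ with $x_i\to x\in(e,t)HX$ may have $\lim_i(s_i,e)x_i$ only in the closure of $(e,t)HX$, so properness of the restricted action cannot be invoked. You flag exactly this point as ``delicate'' and assert that the closed-domain hypotheses are ``genuinely used'' there, but you give no argument; that missing argument is the entire content of the step, and is precisely why Theorem \ref{main} carries properness of $\sigma$ and $\tau$ as a separate hypothesis instead of deducing it from properness of $\al$ and $\be$.

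The paper closes this gap with a direct net computation, which you would need in some form. Given a net $\{(s_i,x_i)\}$ in $H\times X^e$ with $(x_i,(s_i,e)x_i)\to(x,y)$, choose $(s,t),(h,k)\in H\times K$ with $(s,t)x\in X$ and $(h,k)y\in X$, and set $u_i=(s,t)x_i$, which eventually lies in $X$. The closed-domain hypotheses make $X$ clopen in $X^e$ (Lemma \ref{dominio cerrado equivalencias}) and make the domains $X^H_s$ and $X^K_t$ clopen in $X$; from $(hs_is^{-1},kt^{-1})u_i\in X$ one deduces that $v_i:=(e,k^{-1}t)u_i$ lies in $X$ and that $\lim_i v_i\in X$. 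Then $\{(hs_is^{-1},v_i)\}$ is contained in $\Gamma(X,\al)$ and $\{(v_i,hs_is^{-1}\cdot v_i)\}$ converges, so properness of the partial action $\al$ itself --- via condition (2) of Lemma \ref{equivalencia ap propia}, not via the enveloping system --- yields a convergent subnet of $\{hs_is^{-1}\}$, hence of $\{s_i\}$. Note how both closed-domain hypotheses enter: they are what allow one to drag the net back into $X$, where properness of $\al$ applies; your route through $\al^e$ never leaves the $H$-orbit of $X$ and therefore cannot see the points of $X^e$ where properness of $\sigma$ could fail.
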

\begin{proof}
We check the hypotheses of the previous theorem are satisfied. To show
$\al\times\be$ has closed graph notice it has closed domain (Lemma \ref{dominio
cerrado accion producto}) and use Lemma \ref{dominio cerrado equivalencias}.
Finally, we only have to show $\sigma$ and $\tau$ are proper. It is enough to
show $\sigma$ is proper, for that purpose we use Lemma \ref{equivalencia ap
propia}.

Let $\{(s_i,x_i)\}$ be a net in $H\times X^e$ such that $\{(x_i,(s_i,e)x_i)\}$
converges to the point $(x,y)\in X^e\times X^e.$ It is enough to show
$\{s_i\}_{i\in I}$ has a converging subnet. We may assume $(s,t)x\in X$ and
$(h,k)y\in X$, for some $(h,k),(s,t)\in H\times K$.

There is an $i_0$ such that, for $i\gqs i_0$, $(s,t)x_i$ and $(h,k)(s_i,e)x_i$
belong to $X.$ For $i\gqs i_0$ define $u_i=(s,t)x_i$. By the construction of
$\al\times \be$ and because $(hs_is^{-1},kt^{-1})u_i\in X$, we have that $u_i$
is an element of the clopen set $tk^{-1}\star (X^K_{k^{-1}t}\cap
X^H_{ss_ih^{-1}})$. Defining $v_i:=(e,k^{-1}t)u_i$ for every $i\gqs i_0$, we
have that $v_i\in X$. So, the limit $\lim_i v_i$ is an element of $X$ (recall
$X$ is clopen in $X^e$).

The net $\{ (hs_is^{-1},v_i) \}_i$ is contained in $\Gamma(X,\al)$ and
$\{(v_i,hs_is^{-1}\cdot v_i )\}_i$ has a limit point. Then $\{hs_is^{-1}\}_i$
has a converging subnet, an so $\{s_i\}_i$ has a converging subnet. We conclude
$\sigma$ is proper, and we are done.
\end{proof}

\bibliographystyle{hsiam}
\bibliography{FerraroBibliocorto}

\end{document}